\def\CC{\mathbb C}
\def\ZZ{\mathbb Z}
\newcommand{\w}{_{_ \mathcal W}}
\newcommand{\f}{_{_\mathcal F}}
\newcommand{\brb}{~\rlap{\rule[0pt]{0.5pt}{0.9ex}}\rule[1.1ex]{0.5pt}{0.9ex}~}
\newtheorem{algorithm}{Algorithm}[section]
\begin{document}

\title{On the exponential of semi-infinite quasi-Toeplitz
  matrices\thanks{The research was carried out with the support of
    GNCS of INdAM}} \author{Dario A. Bini \thanks{Dipartimento di
    Matematica, Universit\`a di Pisa, Largo Bruno Pontecorvo 5, 56127
    Pisa ({\tt dario.bini@unipi.it})} \and Beatrice Meini
  \thanks{Dipartimento di Matematica, Universit\`a di Pisa, Largo
    Bruno Pontecorvo 5, 56127 Pisa ({\tt beatrice.meini@unipi.it})}}

\maketitle

\markright{Exponential of Toeplitz matrices}

\begin{abstract}
  Let $a(z)=\sum_{i\in\mathbb Z}a_iz^i$ be a complex valued function defined for $|z|=1$, such that $\sum_{i\in\mathbb Z}|ia_i|<\infty$,
  and let $E=(e_{i,j})_{i,j\in\ZZ^+}$ be such that
  $\sum_{i,j\in\ZZ^+}|e_{i,j}|<\infty$.  A semi-infinite
  quasi-Toeplitz matrix is a matrix of the kind $A=T(a)+E$, where
  $T(a)=(t_{i,j})_{i,j\in\ZZ^+}$ is the semi-infinite Toeplitz matrix
  associated with the symbol $a(z)$, that is, $t_{i,j}=a_{j-i}$ for
  $i,j\in\mathbb Z^+$.
  We analyze theoretical and computational properties of the
  exponential of $A$. More specifically, it is shown that
  $\exp(A)=T(\exp(a))+F$ where $F=(f_{i,j})_{i,j\in\ZZ^+}$ is such
  that $\sum_{i,j\in\ZZ^+}|f_{i,j}|$ is finite, i.e., $\exp(A)$ is a
  semi-infinite quasi-Toeplitz matrix as well, and an effective algorithm for
  its computation is given. These results can be extended from the
  function $\exp(z)$ to any function $f(z)$ satisfying mild
  conditions, and can be applied to finite quasi-Toeplitz matrices.

\end{abstract}

\begin{keywords}
Matrix exponential, matrix functions, Taylor series, Toeplitz matrices.
\end{keywords}

\section{The problem and its motivation} \label{s1}
Let $a=\{a_k\}_{k\in\mathbb Z}$ be a bi-infinite sequence of complex
numbers, where the index $k$ ranges in the set $\mathbb Z$ of the
relative integers, and define $T(a)=(t_{i,j})_{i,j\in\mathbb Z^+}$ the
semi-infinite Toeplitz matrix such that $t_{i,j}=a_{j-i}$ for $i,j$ in
the set $\mathbb Z^+$ of positive integers.
 
In this paper, we analyze the problem of computing the matrix
exponential of semi-infinite matrices of the form
\[
A=T(a)+E
\]
where $E=(e_{i,j})_{i,j\in\mathbb Z^+}$ is such that
$\sum_{i,j\in\mathbb Z^+}|e_{i,j}|<+\infty$ and $\sum_{k\in\mathbb Z}|ka_k|<+\infty$. 
We refer to this class of
matrices as Quasi-Toeplitz, in short, QT-matrices.

The problem of computing the matrix exponential of Toeplitz and
quasi-Toeplitz matrices is encountered in diverse applications, like
the Erlangian approximation of Markovian fluid queues \cite{dendlat},
\cite{bdlm}, or the discretization of integro-differential equations
with a shift-invariant kernel which describe the pricing of
single-asset options modeled by jump-diffusion processes
\cite{kressner}, \cite{lee-liu-sun}, \cite{merton}, where matrices are
finite but have huge dimensions since their size has an asymptotic
meaning. Another simple example comes from the numerical solution of
the heat equation $\frac{\partial u}{\partial
  t}=\gamma\frac{\partial^2 u}{\partial x^2}$ where the second
derivative in space is discretized with the three-point finite
difference formula, so that the equation is reduced to an ordinary
differential equation of the kind $v'=Av$, being $v$ the vector
function of the values of $u(x,t)$ at the discretization nodes. If the
spatial domain is infinite, then
$A=-\frac\gamma{h^2}\hbox{trid}(-1,2,-1)$ is a semi-infinite
tridiagonal Toeplitz matrix and the solution can be expressed as
$v(t)=\exp(tA)v(0)$.

In other problems, like the analysis of random walks in the quarter
plane \cite{fayolle}, \cite{Takahashi}, or in the tandem Jackson queue
\cite{Sakuma-Miyazawa}, \cite{Motyer-Taylor}, where the number of
states is infinite denumerable, one encounters semi-infinite
probability matrices which are block Toeplitz where the blocks have
the form $T(a)+E$, with $T(a)$ banded Toeplitz and $E$ having a finite
number of nonzero entries.

These applications make it interesting to design effective algorithms
for computing the exponential, and other analytic functions, of
matrices which are quasi-Toeplitz.

Due to its importance, the problem of computing the exponential of a
Toeplitz matrix has been analyzed in several papers among which
\cite{kressner}, \cite{lee-pang-sun}, \cite{pang-sun},
\cite{feng-wu-wei} where different techniques, like using Krylov
subspaces or displacement operators, are used effectively.  The
analysis of the exponential of infinite matrices is considered in
\cite{grimm}, \cite{hochbruck}, while truncating to finite size the
exponential of a general infinite matrix is considered in
\cite{shao2014finite}.  Solving certain matrix polynomial equations
with QT matrices as coefficients has been recently considered in
\cite{bmm}.

\subsection{New results}
In this paper, we provide some theoretical results and some
algorithmic advances concerning the computation of the exponential of
infinite Quasi-Toeplitz matrices $A=T(a)+E$ where
$a(z)=\sum_{i\in\mathbb Z}a_iz^i$ is such that $\sum_{i\in\mathbb Z}|ia_i|$ is finite. 
That is, $a(z)$ and $a'(z)=\sum_{i\in\mathbb Z}ia_iz^{i-1}$ belong to the Wiener class 
\[
\mathcal W=\left\{a(z)=\sum_{i\in\mathbb Z}a_iz^i:\quad \sum_{i\in\mathbb Z}|a_i|<\infty\right\}.
\]
 The approach that we present is
general and can be easily applied to the finite case and extended to
the computation of more general analytic functions. We rely on the
approximation of $\exp(A)$ given by the Taylor series truncation
$S_k=\sum_{i=0}^k\frac1{i!}A^i$, for a sufficiently large $k$. We
exploit the Toeplitz structure of $A$ and of its powers by
representing $S_k$ as $S_k=T(s_k)+F_k$, where
$s_k(z)=\sum_{i=0}^k\frac1{i!}a(z)^i$ is the Taylor series truncation
of $\exp(a)$ and $F_k$ is a suitable correction.

More precisely, we prove that if the function $a(z)$ is such that 
$a(z),a'(z)\in\mathcal W$, then the matrix
exponential $S=\exp(A)$ is well defined by its power series expansion
$\exp(A)=\sum_{i=0}^\infty \frac1{i!}A^i$ and is still of the form
$T(b)+F$ where $b(z)=\sum_{i\in\mathbb Z}b_iz^i=\exp(a(z))$ and
$\sum_{i,j\in\mathbb Z^+}|f_{i,j}|<+\infty$.  That is, $\exp(A)$ has a
{\em Toeplitz component} and a {\em correction component}.  The
former, represented by the coefficients $b_k$, $k\in\mathbb Z$, can be
easily approximated to any precision by applying the evaluation and
interpolation technique at the roots of the unity to the equation
$b(z)=\exp(a(z))$. The correction $F$ can be approximated to any
precision by means of an easily computable recurrence which generates
a sequence of matrices $F_k$ such that $\lim_{k\to\infty} F_k=F$.

This representation of a function of a Toeplitz matrix differs from
those given in \cite{feng-wu-wei}, \cite{kressner},
\cite{lee-pang-sun} \cite{pang-sun} which rely on the displacement
operator and displacement rank.  The idea of representing the infinite
matrix $\exp(A)$ as a Toeplitz part associated with a suitable
function plus a correction having a finite sum of the moduli of its
entries, is fundamental to deal with {\em infinite} matrices by using
only a {\em finite} number of parameters. In fact, the coefficients $g_i$ of a function $g(z)=\sum_{i\in\mathbb Z}g_iz^i$
such that $\sum_{i\in\mathbb Z}|ig_i|<\infty$,
decay to zero as $i\to\pm\infty$ so that $g(z)$ can be approximated by
means of a Laurent polynomial $\widehat
g(z)=\sum_{i=-n_-}^{n_+}g_iz^i$, for $n_-,n_+>0$ sufficiently
large. Moreover, the finiteness of the sum $\sum_{i,j\in\mathbb
  Z^+}|f_{i,j}|$ allows one to approximate $F$ with a finite matrix
generally of small rank.

The decomposition $A=T(a)+E$ is quite natural and widely used in the
analysis of spectral properties of sequences $\{A_n\}$ of $n\times n$
Toeplitz-like matrices where the additive decomposition is given as
the sum of a Toeplitz part, plus a matrix of small rank plus a matrix
of small norm. We refer the reader to \cite{dibenede}, \cite{serra}
for the basic properties and for a list of references in this regard.
The same kind of decomposition is described in \cite[Example 2.28]{bottcher2012introduction}
where, unlike in this paper, it is assumed that the correction $E$ is a compact operator in $\ell^2$. The boundedness of the operator norm $\|E\|_2$ is not enough for our 
computational goals since it does not imply the boundedness of $\sum_{i,j\in\mathbb Z^+}|e_{i,j}|$.

The representation of a matrix as sum of a Toeplitz part and a
correction is described pictorially in Figure \ref{fig1} where the
exponential of $T(a)$ is shown for $a(z)=\sum_{i=-5}^{10}z^i$.  The
two components $T(\exp(a))$, i.e., the Toeplitz part, and the
``compact'' correction $F$ are displayed together with their sum.

The same approach can be applied to the finite case, where the
correction $F$ involves both the north-west and the south-east
corners. In the finite case, the advantage of this representation is
much appreciated if the size of the matrix is larger than the rank of
the correction.

Our approach is more effective when the exponential of the QT-matrix
has a large decay of the band. In fact, in this case, both the degree
of the Laurent polynomial which approximates $\exp(a)$ and the rank of
the correction matrix $F$ are small.  The analysis of the decay of the
band of matrix functions, and more specifically of Toeplitz matrices,
has been recently performed in the papers \cite{BeBo14},
\cite{BeSi15}, \cite{posi16}. The decay of singular values of matrices
with low displacement rank, including positive definite Hankel
matrices, has been performed in \cite{becker}. An immediate
consequence of our results is that the exponential of a quasi-Toeplitz
matrix can be approximated up to
any error $\epsilon$ by a banded matrix.

The acceleration techniques, based on scaling the variable $A$ and
squaring the exponential of the scaled matrix, can be applied to
reduce the number of terms in the power series expansion needed to
reach a sufficiently accurate approximation.  Similarly, our approach
can be combined with Pad\'e approximation to reduce the cost of the
computation.

The algorithm that we have obtained this way has been implemented in
Matlab and tested with some "synthetic" problems and with some
matrices taken from the applications.

From the numerical experiments it turns out that the correction
component $F$ such that $\exp(T(a))=T(\exp(a))+F$, has generally very
low rank. The same holds for the correction $E_k$ such that
$T(a)^k=T(a^k)+E_k$. This property, which will be object of our future
research analysis, is at the basis of the effectiveness of our
algorithm.

Extensions of this approach with the implementation of quasi-Toeplitz
matrix arithmetic are given in \cite{bmm}, applications to general
analytic functions either expressed as power series or as Cauchy
integrals and the algorithmic analysis of the finite case are treated
in \cite{bmm:sbornik}.

\pgfdeclareimage[width=4.5cm]{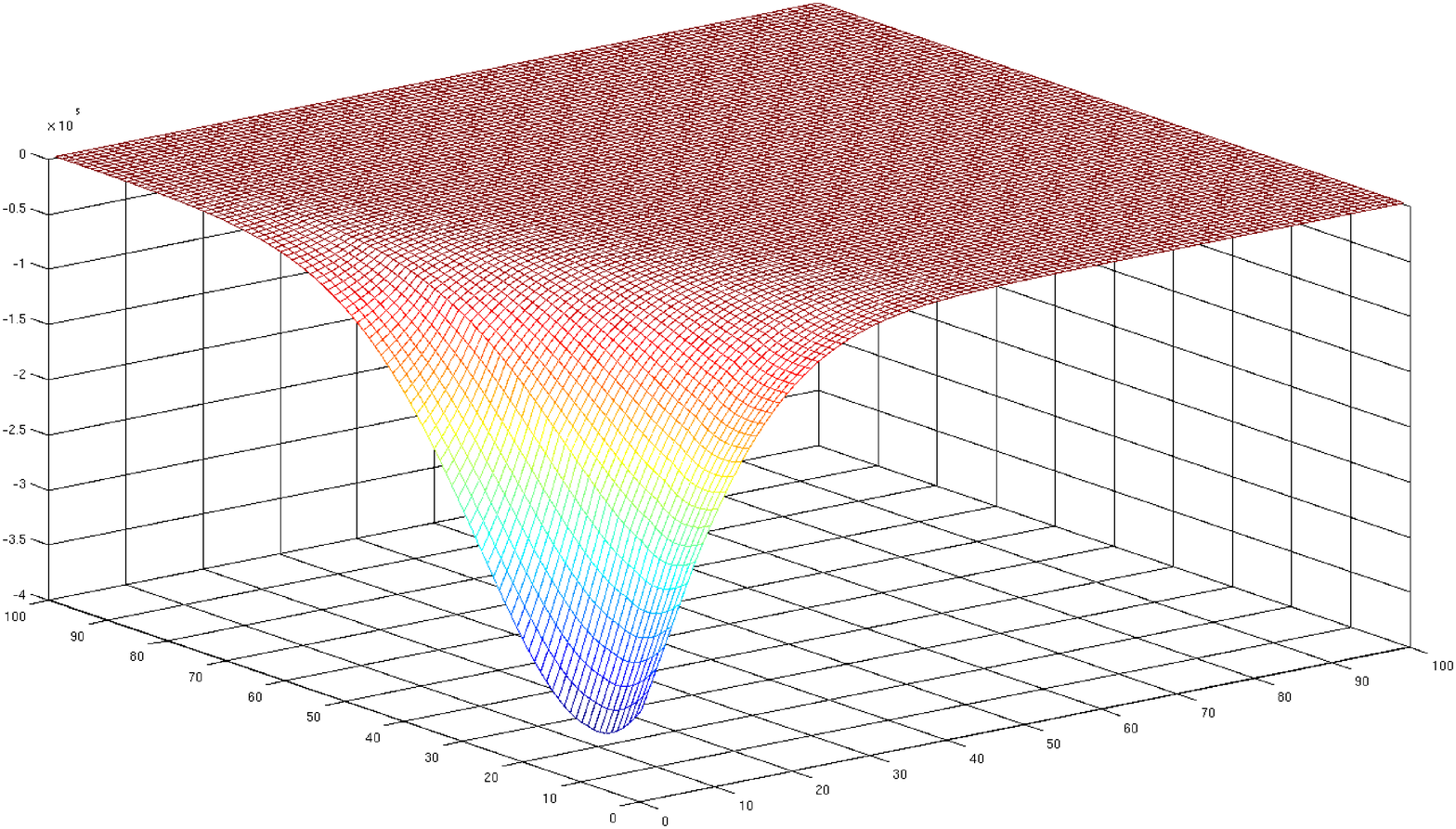}{exp_corr}
\pgfdeclareimage[width=4.5cm]{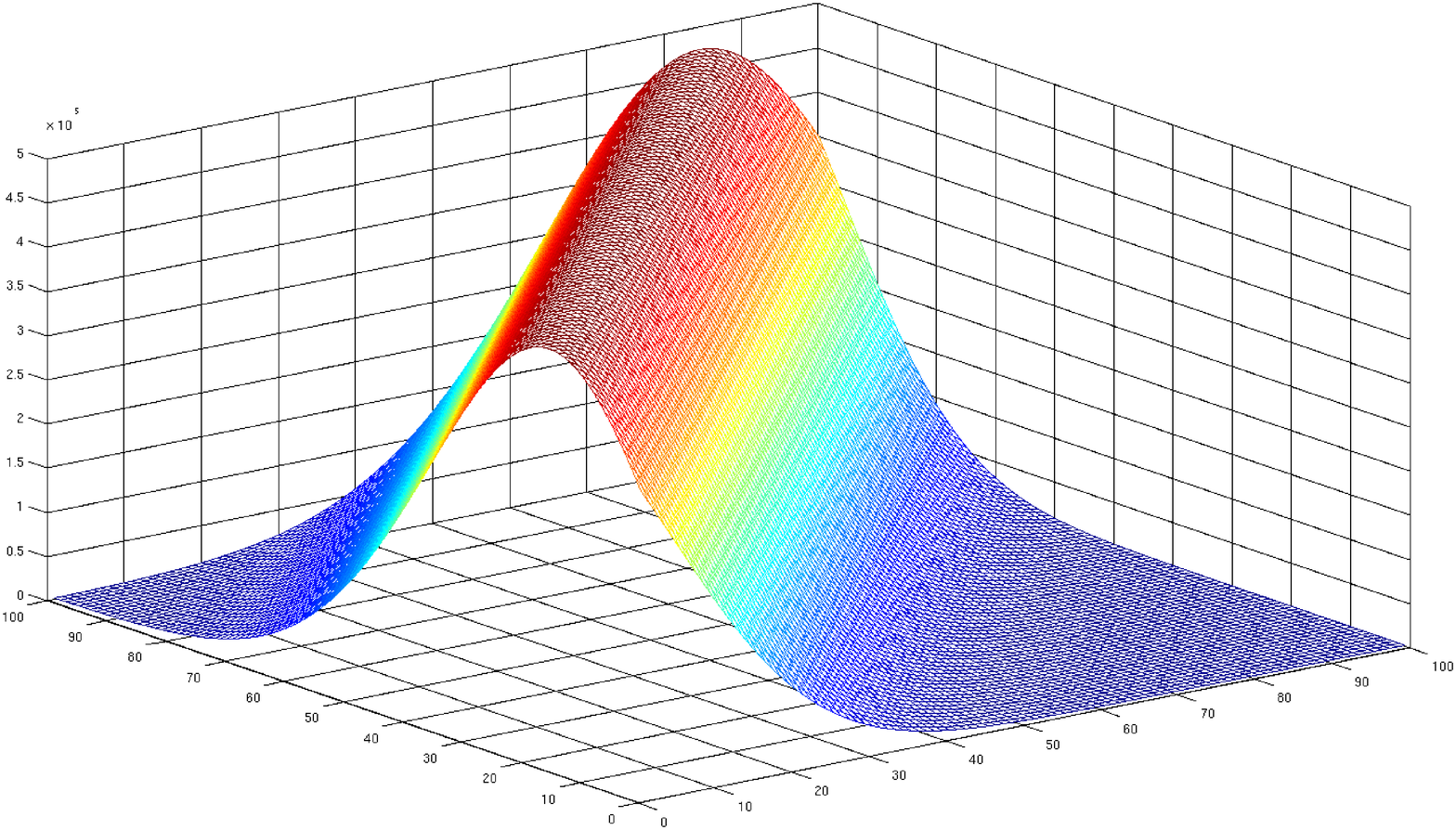}{exp_toep}
\pgfdeclareimage[width=4.5cm]{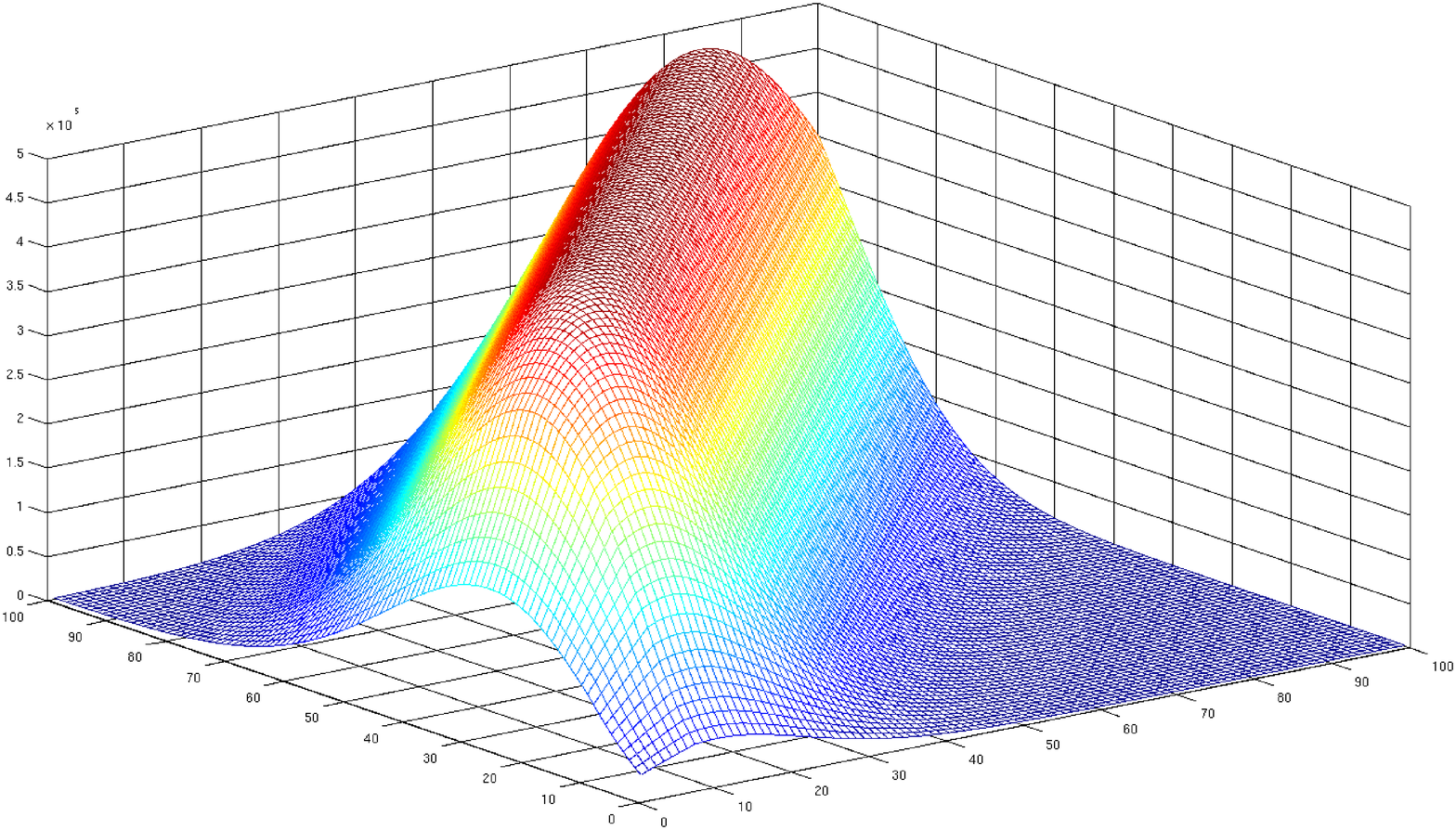}{exp_tot}

\begin{figure}\begin{center}
    \pgfuseimage{exp_tot}\begin{minipage}{0.5cm} \ \\[-2cm]
      =\end{minipage} \pgfuseimage{exp_toep}\begin{minipage}{0.5cm} \
      \\[-2cm] +\end{minipage}\pgfuseimage{exp_corr}
  \end{center}\caption{Decomposition of the matrix exponential as the
    sum of a Toeplitz matrix and of a correction with low numerical
    rank. The matrix is $\exp(T(a))$ where
    $a(z)=\sum_{i=-5}^{10}z^i$}\label{fig1}
\end{figure}

The paper is organized as follows. In Section 2 we recall some
preliminary results concerning semi-infinite Toeplitz matrices,
introduce the norms $\|a\|\w=\sum_{i\in\mathbb Z}|a_i|$ and
$\|F\|\f=\sum_{i,j\in\mathbb Z^+}|f_{i,j}|$, and recall that the
linear space $\mathcal F$ formed by semi-infinite matrices $F$ such
that $\|F\|\f$ is finite is a Banach algebra. In Section 3 we consider
the case of a Toeplitz matrix $T(a)$. We
prove that if $a(z),a'(z)\in\mathcal W$ then $T(a)^i=T(a^i)+E_i$ where $E_i\in\mathcal F$ and give an
explicit relation between $E_i$ and $E_{i-1}$. This relation enables
us to provide a bound to $\|E_i\|\f$ given in terms of $\|a\|\w$ and
of $\|a'\|\w$. In
Section 4 we extend these results to the case of a QT matrix
$A=T(a)+E$. In Section 5 we describe in detail the algorithm for
computing the exponential of a Toeplitz matrix and we outline the case
of a QT matrix. Section 6 reports the results of some numerical
experiments while Section 7 draws some final remarks and conclusions.

\section{Preliminaries}
We recall the basic definition of matrix exponential of an $n\times n$
matrix and the main properties of semi-infinite Toeplitz matrices
which will be used in our analysis.

For an $n\times n$ matrix $A$, it is well known that the series
$\exp(A):=\sum_{i=0}^{+\infty}\frac 1{i!}A^i$ is convergent and
defines the matrix exponential of $A$.  We refer to the book by
N.~Higham \cite{high:book} for the concept of matrix function and for
more details on the matrix exponential.  Indeed, defining $S_k$ the
partial sum
\begin{equation}\label{eq:Ak}
S_k=\sum_{i=0}^k\frac 1{i!}A^i,
\end{equation}
and the remainder $R_k$ of the series as $R_k=\sum_{i=k+1}^\infty\frac
1{i!}A^i$, for any matrix norm $\|\cdot\|$ such that $\|A^2\|\le\|A\|^2$ 
it follows that
\begin{equation}\label{eq:R}
\|R_k\|=\|\sum_{i=k+1}^{\infty}\frac
1{i!}A^i\|\le\sum_{i=k+1}^{\infty}\frac 1{i!}\|A\|^i
\end{equation}
 so that $\lim_{k\to\infty}\|R_k\|=0$ which implies the convergence of
 the sequence $S_k$.

This property is still valid if $A=(a_{i,j})_{i,j\in\ZZ^+}$ is a
semi-infinite matrix provided that $A$ belongs to a Banach algebra
$\mathcal A$, that is an algebra endowed with a sub-multiplicative
norm $\|\cdot\|$, such that $\|AB\|\le\|A\|\cdot\|B\|$ for any
$A,B\in\mathcal A$, which makes it a Banach space.
Indeed, for $A\in\mathcal A$, consider the sequence $\{S_k\}_k$ defined in
\eqref{eq:Ak}. For $i>j$ we have
\[
\|S_i-S_j\|\le\sum_{h=j+1}^{i}\frac1{h!}\|A^h\|\le
\sum_{h=j+1}^{i}\frac1{h!}\|A\|^h.
\]
From this bound it follows that for any $\epsilon>0$ there exists
$k>0$ such that $\|S_i-S_j\|\le\epsilon$ for any $i>j\ge k$. That is,
$\{S_k\}_k$ is a Cauchy sequence. Since by definition of Banach space,
the Cauchy sequences in $\mathcal A$ have a limit in $\mathcal A$,
there exists a matrix $L$ such that $\lim_{k\to\infty}\|S_k-L\|=0$. We
denote $L=\exp(A)$. Thus, the matrix exponential is well defined also
for a semi-infinite matrix $A$ provided that it belongs to a Banach
algebra with respect to some norm.

We now recall some results concerning infinite Toeplitz matrices. For
more details on this topic we refer the reader to the book by
B\"ottcher and Grudsky \cite{bg:book}.

Let $\mathcal W=\{a(z)=\sum_{i\in\ZZ}a_iz^i:\quad
\sum_{i\in\ZZ}|a_i|<+\infty\}$ denote the Wiener algebra formed by
Laurent power series, defined on the unit circle $\mathbb
T=\{z\in\CC:\quad |z|=1\}$, whose coefficients have a finite sum of
their moduli. It is well known that $\mathcal W$ endowed with the norm
$\|a\|_{\mathcal W}=\sum_{i\in\ZZ}|a_i|$ is a Banach algebra. For
$a(z)\in\mathcal W$ denote $T(a)$ the semi-infinite Toeplitz matrix
whose entries $t_{i,j}$ are such that $t_{i,j}=a_{j-i}$ for
$i,j\in\ZZ^+$, where $\ZZ^+$ denotes the set of positive
integers. Denote also $a_+(z)$ and $a_-(z)$ the power series defined
by $a_+(z)=\sum_{i\in\ZZ^+}a_iz^i$ and
$a_-(z)=\sum_{i\in\ZZ^+}a_{-i}z^{i}$ so that
$a(z)=a_0+a_+(z)+a_-(z^{-1})$. Finally, given the power series
$b(z)=\sum_{z\in\ZZ^+}b_iz^i$ define $H(b)=(h_{i,j})$ the Hankel
matrix such that $h_{i,j}=b_{i+j-1}$, for $i,j\in\ZZ^+$.

Any semi-infinite matrix $S=(s_{i,j})_{i,j\in\mathbb Z^+}$ can be
viewed as a linear operator, acting on semi-infinite vectors
$v=(v_i)_{i\in\mathbb Z^+}$, which maps the vector $v$ onto the vector
$u$ such that $u_i=\sum_{j\in\mathbb Z^+}s_{i,j}v_j$, provided that
the summations are finite.  For any $p\ge 1$, included $p=\infty$, we
may define the Banach space $\ell^p$ formed by all the semi-infinite
vectors $v=(v_i)_{i\in\mathbb Z^+}$ such that
$\|v\|_p=(\sum_{i\in\ZZ^+}|v_i|^p)^{\frac 1p}<\infty$, where for
$p=\infty$ we have $\|v\|_\infty=\sup_{i\in\ZZ^+}|v_i|$. It is well
known that these norms induce the corresponding operator norms
$\|S\|_p=\sup_{\|v\|_p=1}\|Sv\|_p$ which are sub-multiplicative, i.e.,
$\|AB\|_p\le\|A\|_p\|B\|_p$ for any semi-infinite matrices $A$, $B$
with finite $\ell^p$ norm, and that the linear space formed by the
latter semi-infinite matrices forms a Banach algebra.

We may wonder if the matrices $T(a)$, $H(a_+)$ and $H(a_-)$ define
linear operators acting on the Banach space $\ell^p$ and if they have
a finite operator norm.  The answer to this question is given by the
following result of \cite{bg:book} which relates the matrix $T(a)T(b)$
with $T(ab)$, $H(a_-)$ and $H(a_+)$.

\begin{theorem}\label{thm1}For $a(z),b(z)\in\mathcal W$ let 
$c(z)=a(z)b(z)$. Then we have
\[
T(a)T(b)=T(c)-H(a_-)H(b_+).
\]
Moreover,  for any $p\ge 1$, including $p=\infty$, we have 
\[
\|T(a)\|_p\le \|a\|\w,\quad \|H(a_-)\|_p\le\|a_-\|\w,\quad 
\|H(b_+)\|_p\le\|b_+\|\w.
\] 
\end{theorem}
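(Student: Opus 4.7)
The plan is to verify the product identity entrywise and then to deduce both norm bounds from a ``diagonal'' decomposition of $T(a)$ and a similar antidiagonal decomposition of the Hankel matrices into operators that are manifestly of $\ell^p$-norm at most one.

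For the first statement, I would compute
\[
(T(a)T(b))_{i,j}=\sum_{k\ge 1} a_{k-i}b_{j-k}
\]
and observe that extending the sum to all $k\in\ZZ$ yields the convolution $c_{j-i}=T(c)_{i,j}$ by the very definition of $c=ab$. The ``missing'' piece $\sum_{k\le 0} a_{k-i}b_{j-k}$, after the reindexing $k=1-m$, becomes $\sum_{m\ge 1} a_{-(i+m-1)}b_{j+m-1}$; reading off the definitions $H(a_-)_{i,m}=a_{-(i+m-1)}$ and $H(b_+)_{m,j}=b_{m+j-1}$, this is exactly $(H(a_-)H(b_+))_{i,j}$. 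Subtracting yields the stated identity.

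For the norm bounds, I would write $T(a)=\sum_{k\in\ZZ}a_k J_k$, where $J_k$ is the semi-infinite matrix with entries $(J_k)_{i,j}=1$ iff $j-i=k$ and $i,j\in\ZZ^+$. Each $J_k$ has at most one nonzero entry (equal to $1$) per row and per column, so direct estimation shows $\|J_k v\|_p\le\|v\|_p$ for every $p\in[1,\infty]$, hence $\|J_k\|_p\le 1$. Since $\sum_k |a_k|=\|a\|\w<\infty$, the partial sums $\sum_{|k|\le N}a_k J_k$ form a Cauchy sequence in operator norm converging (entrywise and hence) to $T(a)$, and the triangle inequality gives $\|T(a)\|_p\le\|a\|\w$. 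For the Hankel bound I would decompose $H(a_-)=\sum_{k\ge 1}a_{-k} M_k$, where $(M_k)_{i,j}=1$ iff $i+j-1=k$; the support of $M_k$ is the single finite antidiagonal $\{(i,k+1-i):i=1,\dots,k\}$, so again each row and column carries at most one $1$, giving $\|M_k\|_p\le 1$. The triangle inequality then yields $\|H(a_-)\|_p\le\|a_-\|\w$, and the bound for $H(b_+)$ is identical.

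The only real subtlety is bookkeeping: getting the reindexing $k\mapsto 1-m$ right so that the lost tail of the convolution matches, row by row and column by column, the Hankel--Hankel product, and checking uniformly in $p$ (including $p=\infty$) that each shift $J_k$ and each antidiagonal block $M_k$ is nonexpansive. Both issues are mechanical once the index patterns are written out carefully; no deeper tool than absolute summability of the Wiener coefficients is required.
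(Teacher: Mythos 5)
Your proof is correct. Note that the paper itself gives no proof of this theorem: it is imported verbatim from B\"ottcher--Grudsky \cite{bg:book}, so there is no in-paper argument to compare against. Your verification is the standard one and is carried out correctly: splitting $\sum_{k\in\ZZ}a_{k-i}b_{j-k}$ (absolutely convergent since $\|a\|\w,\|b\|\w<\infty$) into the parts $k\ge 1$ and $k\le 0$, with the substitution $k=1-m$ matching the tail exactly to $(H(a_-)H(b_+))_{i,j}=\sum_{m\ge1}a_{-(i+m-1)}b_{m+j-1}$ under the paper's conventions $(a_-)_k=a_{-k}$ and $H(b)_{i,j}=b_{i+j-1}$; and the norm bounds via the decompositions $T(a)=\sum_k a_kJ_k$ and $H(a_-)=\sum_{k\ge1}a_{-k}M_k$ into operators with at most one unit entry per row and column, each nonexpansive on $\ell^p$ for all $p\in[1,\infty]$, followed by the triangle inequality. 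No gaps.
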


The above result implies that the product of two Toeplitz matrices can
be written as a Toeplitz matrix plus a correction whose $\ell^p$-norm
is bounded by $\|a_-\|\w\|b_+\|\w\le\|a\|\w \|b\|\w$. 

For a semi-infinite matrix $S$ we introduce the norm
$\|S\|\f=\sum_{i,j\in\ZZ^+}|s_{i,j}|$, which coincides with the
$\ell^1$ norm if we consider the matrix $S$ as the vector $s_{i,j}$
where the pairs $(i,j)$ are ordered in a triangular fashion starting
from the leftmost top corner $(1,1)$.  We denote by $\mathcal{F}$ the
set of semi-infinite matrices such that $\|S\|\f<\infty$. The next
result states that $\|\cdot\|\f$ is sub-multiplicative in
$\mathcal{F}$ and that $\mathcal F$ is a Banach algebra.

\begin{theorem}\label{thm:3.1}
  For $A,B\in\mathcal F$ it holds $\|AB\|\f\le\|A\|\f\cdot\|B\|\f$,
  moreover, $\mathcal F$ is a Banach algebra.
\end{theorem}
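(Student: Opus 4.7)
The plan is to prove sub-multiplicativity by a direct computation on the $\ell^1$-style norm $\|\cdot\|\f$ and then derive the Banach algebra property from the observation that $\mathcal F$ is nothing but the space $\ell^1(\mathbb Z^+\!\times\!\mathbb Z^+)$ in disguise.

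First I would check that the matrix product $AB$ is well defined entrywise: each series $(AB)_{i,j}=\sum_{k\in\mathbb Z^+}a_{i,k}b_{k,j}$ converges absolutely, because the estimate below will show the whole triple sum to be finite. Next, using the triangle inequality entrywise and then Tonelli's theorem on the non-negative triple sum, I would write
\[
\|AB\|\f=\sum_{i,j}\Big|\sum_k a_{i,k}b_{k,j}\Big|\le\sum_k\Big(\sum_i|a_{i,k}|\Big)\Big(\sum_j|b_{k,j}|\Big).
\]
Setting $\alpha_k=\sum_i|a_{i,k}|\ge0$ and $\beta_k=\sum_j|b_{k,j}|\ge0$, one has $\sum_k\alpha_k=\|A\|\f$ and $\sum_k\beta_k=\|B\|\f$, and the elementary inequality $\sum_k\alpha_k\beta_k\le\bigl(\sum_k\alpha_k\bigr)\bigl(\sum_k\beta_k\bigr)$ for non-negative sequences (which holds because the right-hand side, expanded, contains the left-hand side plus extra non-negative cross terms) yields $\|AB\|\f\le\|A\|\f\cdot\|B\|\f$. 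In particular $AB\in\mathcal F$, so $\mathcal F$ is closed under multiplication.

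Finally, to obtain the Banach algebra property I would observe that $\|\cdot\|\f$ is literally the $\ell^1$-norm on the countable index set $\mathbb Z^+\!\times\!\mathbb Z^+$ once one orders the pairs $(i,j)$ into a sequence (as the authors already note before the statement). Hence $(\mathcal F,\|\cdot\|\f)$ is isometrically isomorphic as a normed space to $\ell^1$, which is a standard Banach space; combining completeness with the sub-multiplicativity established above shows that $\mathcal F$ is a Banach algebra.

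There is no real obstacle here: the entire argument is the reordering of a non-negative triple sum plus the identification with $\ell^1$. The only point that deserves a word of care is the application of Tonelli to swap the order of summation, which is legitimate precisely because the integrand is non-negative and finiteness of the final expression is guaranteed by $\|A\|\f,\|B\|\f<\infty$.
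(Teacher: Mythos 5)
Your proposal is correct and follows essentially the same route as the paper: both arguments bound the non-negative triple sum $\sum_{i,j,k}|a_{i,k}||b_{k,j}|$ using the elementary inequality $\sum_k x_k y_k\le(\sum_k x_k)(\sum_k y_k)$ (you apply it after grouping by the middle index $k$, the paper applies it entrywise to get $|c_{i,j}|\le\alpha_i\beta_j$ before summing, a purely cosmetic difference), and both obtain completeness by identifying $(\mathcal F,\|\cdot\|\f)$ with $\ell^1$ over the countable index set $\mathbb Z^+\times\mathbb Z^+$. No gaps.
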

\begin{proof}
  Let $C=AB$ so that $c_{i,j}=\sum_{r\in\ZZ^+}a_{i,r}b_{r,j}$. Since
  $\sum_{i,r\in\ZZ^+}|a_{i,r}|<+\infty$, then
  $\alpha_i=\sum_{r\in\ZZ^+}|a_{i,r}|<+\infty$ and
  $\sum_{i\in\ZZ^+}\alpha_i=\|A\|\f$. Similarly,
  $\beta_j=\sum_{r\in\ZZ^+}|b_{r,j}|<+\infty$ and
  $\sum_{j\in\ZZ^+}\beta_j=\|B\|\f$. Whence we obtain
\[
|c_{i,j}|\le \sum_{r\in\ZZ^+}|a_{i,r}b_{r,j}|\le \alpha_i\beta_j
\]
since, in general $\sum_{r\in\ZZ^+} x_r
y_r\le(\sum_{r\in\ZZ^+}x_r)(\sum_ {r\in\ZZ^+}y_r)$ for any $x_r,y_r\ge
0$ such that $\sum_{r\in\ZZ^+}x_r$ and $\sum_{r\in\ZZ^+}y_r$ are
bounded.  Thus we get
\[
\sum_{i,j\in\ZZ^+}|c_{i,j}|\le
\sum_{i\in\ZZ^+}\alpha_i\sum_{j\in\ZZ^+}\beta_j=\|A\|\f\cdot\|B\|\f.
\]
In order to prove completeness of $\mathcal F$, observe that the norm
$\|\cdot\|_{\mathcal F}$ corresponds to the $\ell^1$ norm in the space
of infinite sequences having finite sum of their moduli. This way, the
space $\mathcal F$ actually coincides with $\ell^1$, which is a Banach
space.
\end{proof}

The following result will be useful to prove boundedness properties of
the exponential of Toeplitz matrices.

\begin{lemma}\label{lem1} If $E=(e_{i,j})\in\mathcal{F}$ and
  $a(z)\in\mathcal{W}$, then
 $\|T(a)E\|\f\le \|a\|\w\|E\|\f$.
\end{lemma}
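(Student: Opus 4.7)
My plan is to carry out a direct bookkeeping argument on the double (really triple) sum defining $\|T(a)E\|\f$, using Tonelli's theorem (swap of sums for nonnegative terms) as the one technical device.

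First I would write down the $(i,j)$-entry of the product explicitly: since $T(a)$ has entries $t_{i,k}=a_{k-i}$, we get
\[
(T(a)E)_{i,j}=\sum_{k\in\mathbb Z^+} a_{k-i}\,e_{k,j},
\]
where absolute convergence needs to be checked (it will fall out at the end, but in fact $\sum_k|a_{k-i}||e_{k,j}|\le\|a\|\w\sum_k|e_{k,j}|<\infty$ since $E\in\mathcal F$ forces $\sum_k|e_{k,j}|<\infty$ for each fixed $j$, so the series defining the entry is well defined).

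Next I apply the triangle inequality inside the norm and reorganize:
\[
\|T(a)E\|\f=\sum_{i,j\in\mathbb Z^+}\Bigl|\sum_{k\in\mathbb Z^+}a_{k-i}e_{k,j}\Bigr|
\le\sum_{k,j\in\mathbb Z^+}|e_{k,j}|\sum_{i\in\mathbb Z^+}|a_{k-i}|.
\]
The swap of the order of summation is legitimate because all terms are nonnegative (Tonelli). The key observation is now that, for each fixed $k\in\mathbb Z^+$, as $i$ ranges over $\mathbb Z^+$ the index $k-i$ ranges over the subset $\{k-1,k-2,\dots\}\subset\mathbb Z$, so
\[
\sum_{i\in\mathbb Z^+}|a_{k-i}|\le\sum_{m\in\mathbb Z}|a_m|=\|a\|\w.
\]
Substituting this bound back gives $\|T(a)E\|\f\le\|a\|\w\sum_{k,j}|e_{k,j}|=\|a\|\w\|E\|\f$, which is the claim.

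There is no real obstacle here; the only thing one has to be slightly careful about is the justification for interchanging the sums, which is handled automatically because everything in sight is nonnegative once the triangle inequality has been applied. The same argument would of course have worked (with the factor $\|a\|\w$ replaced by $\|T(a)\|_\infty$) via the bound from Theorem~\ref{thm1}, but working on the entries directly is cleaner and gives exactly the constant stated.
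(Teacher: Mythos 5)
Your argument is correct and is essentially identical to the paper's proof: both compute the entries $(T(a)E)_{i,j}=\sum_{k}a_{k-i}e_{k,j}$, apply the triangle inequality, interchange the order of summation over nonnegative terms, and bound the inner sum $\sum_{i\in\mathbb Z^+}|a_{k-i}|$ by $\|a\|\w$. The only (harmless) addition is your explicit remark on the well-definedness of the entries, which the paper leaves implicit.
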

\begin{proof}
  Let $V=T(a)E$ so that $v_{i,j}=\sum_{r\in\ZZ^+}a_{r-i}e_{r,j}$.
  Observe that for any $j,r\in\mathbb Z^+$, one has $\sum_{i\in\mathbb
    Z^+}|a_{r-i}e_{r,j}|\le \sum_{k\in\mathbb Z}|a_k|\cdot
  |e_{r,j}|=\|a\|\w |e_{r,j}|$. From this inequality we find that

\[\begin{split}
\|V\|\f &=\sum_{i,j\in\mathbb Z^+}|v_{i,j}|
\le\sum_{i,j\in\ZZ^+}\sum_{r\in\ZZ^+}|a_{r-i}e_{r,j}|
= \sum_{r,j\in\ZZ^+}\sum_{i\in\ZZ^+}|a_{r-i}e_{r,j}|\\ 
&\le
\|a\|\w\sum_{r,j\in\mathbb Z^+}|e_{r,j}|=\|a\|\w\|E\|\f.
\end{split}
\]
\end{proof}

\section{Exponential of a semi-infinite Toeplitz matrix}\label{sec:toep}
In this section we study the properties of the exponential of a
semi-infinite Toeplitz matrix, by relating in particular $\exp(T(a))$
to $T(\exp(a))$.

Let $a\in\mathcal W$ and consider the associated semi-infinite Toeplitz
matrix $T(a)$. Since, for Theorem \ref{thm1}, $T(a)$
belong to the Banach algebra of linear operators over $\ell^p$, then
$\exp(T(a))$ is well defined. Moreover, we may write
\[
\exp(T(a))=\sum_{i=0}^\infty\frac1{i!}T(a)^i.
\]
From Theorem \ref{thm1} and for the monotonicity of the function
$\exp(z)$ we have
\[
\|\exp(T(a))\|_p\le\sum_{i=0}^\infty\frac1{i!}\|T(a)\|_p^i=\exp(\|T(a)\|_p)
\le\exp(\|a\|\w).
\]

Now we will take a closer look at $\exp(T(a))$ and relate it to
$T(\exp(a))$. Since $\mathcal W$ is a Banach algebra, the function
$\exp(z)$ is well defined over $\mathcal W$ and we have
\[
\exp(a(z))=\sum_{i=0}^{+\infty}\frac1{i!}a(z)^i.
\]

We first relate $T(a)^i$ to $T(a^i)$, for $i\ge 2$.  From Theorem
\ref{thm1} we may write $T(a)^2=T(a^2)+E_2$, where
$E_2=-H(a_-)H(a_+)$.  For a general $i\ge 0$ define $E_i$ as
\begin{equation}\label{eq:Ei}
E_i=T(a)^i-T(a^i),
\end{equation}
where $E_0=0$, $E_1=0$. Then we have the following

\begin{theorem}\label{th:pow}
Let $a\in\mathcal W$ and let $E_i=T(a)^i-T(a^i)$, for $i\ge 1$.
Then
\begin{equation}\label{eq:Ek}
\begin{split}
&E_{i}=T(a)E_{i-1}-H(a_-)H((a^{i-1})_+),\quad i\ge 2,\\
&E_1=0.
\end{split}
\end{equation}
Moreover, for any $i\ge 1$ and any integer $p\ge 1$, included $p=\infty$,
\begin{equation}\label{eq:normE1}
\|E_i\|_p\le (i-1)\|a\|\w^i.
\end{equation}
\end{theorem}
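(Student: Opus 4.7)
The plan is to establish the recurrence \eqref{eq:Ek} first by direct algebraic manipulation using Theorem \ref{thm1}, and then to derive the norm bound \eqref{eq:normE1} by induction on $i$.

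For the recurrence, I would start from $T(a)^i = T(a)\cdot T(a)^{i-1}$ and substitute $T(a)^{i-1} = T(a^{i-1}) + E_{i-1}$ from the definition \eqref{eq:Ei}. Note that $a^{i-1} \in \mathcal{W}$ since $\mathcal{W}$ is a Banach algebra, so Theorem \ref{thm1} applies with the choice $b = a^{i-1}$ and yields $T(a)T(a^{i-1}) = T(a^{i}) - H(a_-)H((a^{i-1})_+)$. Combining the two identities and subtracting $T(a^i)$ from both sides gives the recurrence directly. The case $E_1=0$ is part of the initial data.

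For the bound \eqref{eq:normE1}, I would argue by induction on $i$. The base case $i=1$ is trivial. For the inductive step, assume $\|E_{i-1}\|_p \leq (i-2)\|a\|\w^{\,i-1}$, apply the triangle inequality to \eqref{eq:Ek}, and use the sub-multiplicativity of $\|\cdot\|_p$ together with the three estimates from Theorem \ref{thm1}: $\|T(a)\|_p \leq \|a\|\w$, $\|H(a_-)\|_p \leq \|a_-\|\w$, and $\|H((a^{i-1})_+)\|_p \leq \|(a^{i-1})_+\|\w$. Since $\|a_-\|\w \leq \|a\|\w$ and $\|(a^{i-1})_+\|\w \leq \|a^{i-1}\|\w \leq \|a\|\w^{\,i-1}$ (the latter by the Banach algebra property of $\mathcal{W}$), the recurrence yields
\[
\|E_i\|_p \leq \|a\|\w \cdot (i-2)\|a\|\w^{\,i-1} + \|a\|\w \cdot \|a\|\w^{\,i-1} = (i-1)\|a\|\w^{\,i},
\]
which closes the induction.

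I do not expect a genuine obstacle: Theorem \ref{thm1} does all the heavy lifting, and the only minor points to check are that $\|a_\pm\|\w$ and $\|(a^{i-1})_+\|\w$ are bounded by the full Wiener norms (immediate since dropping terms from a series can only decrease its $\|\cdot\|\w$-norm) and that $a^{i-1}\in\mathcal{W}$ with $\|a^{i-1}\|\w\leq\|a\|\w^{\,i-1}$, which is the sub-multiplicativity of the Wiener norm. The bound $(i-1)$ rather than $i$ comes out naturally from the induction because the first Hankel correction appears only at $i=2$.
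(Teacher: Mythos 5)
Your proposal is correct and follows essentially the same route as the paper: the recurrence is obtained by writing $T(a)^i=T(a)[T(a^{i-1})+E_{i-1}]$ and applying Theorem \ref{thm1} with $b=a^{i-1}$, and the bound \eqref{eq:normE1} follows by the same induction using $\|a_-\|\w\le\|a\|\w$ and $\|a^{i-1}\|\w\le\|a\|\w^{i-1}$. No gaps.
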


\begin{proof}From the equation $T(a)^{i}=T(a)T(a)^{i-1}$ and from
  Theorem \ref{thm1} we obtain
\[\begin{split}
T(a)^i=&T(a)T(a)^{i-1}=T(a)[T(a^{i-1})+E_{i-1}]\\
=&
T(a^i)-H(a_-)H((a^{i-1})_+)+T(a)E_{i-1}\\
=&T(a^i)+E_i,
\end{split}
\]
with $E_i=-H(a_-)H((a^{i-1})_+)+T(a)E_{i-1}$.
Whence we deduce recurrence \eqref{eq:Ek}.
Moreover, for any $\ell^p$-norm, since $\|a_+\|\w\le\|a\|\w$,
$\|a_-\|\w\le\|a\|\w$ and $\|a^i\|\w\le\|a\|\w^i$, applying once again
Theorem \ref{thm1}, from \eqref{eq:Ek} we obtain
\[
\|E_i\|_p\le \|a\|\w \|E_{i-1}\|_p+\|a\|\w\|a^{i-1}\|\w\le 
\|a\|\w \|E_{i-1}\|_p+\|a\|\w^i.
\]
By using an induction argument we arrive at the bound
\eqref{eq:normE1}.
\end{proof}

Now define $S_k$, $F_k$ and $G_k$ as follows
\[\begin{split}
&S_k=\sum_{i=0}^k\frac 1{i!}T(a)^i=G_k+F_k\\
&G_k=\sum_{i=0}^k\frac1{i!}T(a^i),\quad F_k=\sum_{i=0}^k\frac1{i!}E_i.
\end{split}
\]

Observe that
$G_k=\sum_{i=0}^k\frac1{i!}T(a^i)=T(\sum_{i=0}^k\frac1{i!}a^i)$ is
such that $\lim_{k\to\infty}G_k=T(\exp(a))$. Thus, since
$\lim_{k\to\infty} S_k=\exp(T(a))$, then there exists the limit
\begin{equation}\label{eq:expT}
\lim_{k\to\infty}F_k=\exp(T(a))-T(\exp(a))=:F.
\end{equation}

This way, we may write the exponential of a semi-infinite Toeplitz matrix
$T(a)$ associated with a symbol $a\in\mathcal W$ in the form
\[
\exp(T(a))=T(\exp(a))+F
\]
where $F=\sum_{i=0}^\infty\frac1{i!}E_i$ has $\ell^p$ norm bounded by
\begin{equation}\label{eq:bound}
\begin{split}
\|F\|_p\le& \|\exp(T(a))\|_p+\|T(\exp(a))\|_p\le\exp(\|a\|\w)+\|\exp(a)\|\w\\
\le&
2\exp(\|a\|\w)
\end{split}\end{equation}
since $\|\exp(a)\|\w\le\exp(\|a\|\w)$.

Now, our next step is to prove a stronger property. That is, we show that under
certain conditions, the correction $F$ is such that
$\|F\|\f<\infty$. This property is stronger than $\|F\|_p<\infty$ and
is very useful from the computational point of view since it implies
that for any $\epsilon>0$ there exists $k$ such that
$|f_{i,j}|\le\epsilon$ for any $i,j>k$. This bound allows us to
represent $F$, up to within any given error bound, by using a finite
number of parameters.

Let $a(z)\in\mathcal W$ be such that $a'(z)\in\mathcal W$ where $a'(z)=\sum_{i\in\mathbb Z}ia_iz^{i-1}$, so that $\sum_{i\in\mathbb Z}|a_i|$ and $\sum_{i\in\mathbb Z}|ia_i|$ are finite. Moreover, observe that if $a(z),a'(z)\in\mathcal W$ and $b(z),b'(z)\in\mathcal W$, then also $c(z),c'(z)\in\mathcal W$ for $c(z)=a(z)+b(z)$ and for $c(z)=a(z)b(z)$. This property enables us to
prove the following

\begin{theorem}\label{th2}
Let $a(z),a'(z)\in\mathcal W$. 
Then  for any $k\ge 1$ we have
\[
\|H(a_-)H((a^{k-1})_+)\|\f\le
(k-1)\|a\|\w^{k-2}\|a'\|\w^2.
\]
\end{theorem}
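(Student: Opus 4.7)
The plan is to work directly with the series entries of the product $H(a_-)H((a^{k-1})_+)$, take absolute values inside the sums, and reduce the resulting triple sum to a product of two weighted $\ell^1$-sums that can be identified with derivative norms. Using the Hankel definition, the $(i,j)$ entry equals
$$\bigl(H(a_-)H((a^{k-1})_+)\bigr)_{i,j}=\sum_{r\ge 1}a_{-(i+r-1)}\,(a^{k-1})_{r+j-1},$$
so summing moduli and swapping the order of summation gives
$$\|H(a_-)H((a^{k-1})_+)\|\f\le\sum_{r\ge 1}\sum_{i,j\ge 1}|a_{-(i+r-1)}|\,|(a^{k-1})_{r+j-1}|.$$

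The key re-indexing is to put $s=i+r-1$, $t=r+j-1$, so that the constraints $i,j\ge 1$ become $s,t\ge r$. For fixed $(s,t)$ the number of admissible $r\in\{1,\dots,\min(s,t)\}$ is exactly $\min(s,t)$, yielding
$$\|H(a_-)H((a^{k-1})_+)\|\f\le\sum_{s,t\ge 1}\min(s,t)\,|a_{-s}|\,|(a^{k-1})_t|.$$
Now the crucial observation: since $s,t\ge 1$, one has $\min(s,t)\le st$ (because $\min(s,t)\cdot\max(s,t)=st$ with $\max(s,t)\ge 1$). This lets the double sum decouple into a product
$$\Bigl(\sum_{s\ge 1}s|a_{-s}|\Bigr)\Bigl(\sum_{t\ge 1}t|(a^{k-1})_t|\Bigr).$$

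The first factor is bounded by $\|a'\|\w=\sum_{i\in\ZZ}|ia_i|$ directly from the definition of $a'$ applied to negative indices. For the second, differentiating gives $(a^{k-1})'=(k-1)a^{k-2}a'$, and since $\mathcal W$ is a Banach algebra one has $\sum_{t\ge 1}t|(a^{k-1})_t|\le\|(a^{k-1})'\|\w\le(k-1)\|a\|\w^{k-2}\|a'\|\w$. Multiplying gives the claimed $(k-1)\|a\|\w^{k-2}\|a'\|\w^2$; the case $k=1$ is vacuous since $(a^0)_+=0$. The only step with any subtlety is the decoupling bound $\min(s,t)\le st$: the naive estimate $\min(s,t)\le\sqrt{st}$ (from $\min^2\le st$) would not yield two factors each carrying a full derivative weight, so one must exploit $s,t\ge 1$ to trade sharpness for separability. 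Everything else is index bookkeeping.
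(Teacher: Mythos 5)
Your proof is correct and is essentially the paper's argument: the paper bounds $\|H(a_-)H((a^{k-1})_+)\|\f\le\|H(a_-)\|\f\,\|H((a^{k-1})_+)\|\f$ by the submultiplicativity of $\|\cdot\|\f$ (Theorem \ref{thm:3.1}) together with $\|H(b_+)\|\f=\sum_{h\ge 1}h|b_h|\le\|b'\|\w$, which is precisely what your re-indexing and the decoupling $\min(s,t)\le st$ re-derive by hand. The remaining steps --- the identity $(a^{k-1})'=(k-1)a^{k-2}a'$ and the Wiener-algebra bound --- coincide with the paper's.
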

\begin{proof}
 We have
 $\|H(a_-)\|\f=\sum_{i,j,\in\ZZ^+}|a_{1-i-j}|=\sum_{h\in\ZZ^+}h|a_{-h}|\le\|a'\|\w$
 which is finite since $a'(z)\in\mathcal W$. Similarly,
 $\|H((a^{k-1})_+)\|\f\le \|(a^{k-1})'\|\w<\infty$ since both the functions $a^{k-1}(z)$ and 
 $(a^{k-1}(z))'$ belong to $\mathcal W$. Thus, for the matrix product
 $L_k=H(a_-)H((a^{k-1})_+)$ we find that
\[
\|L_k\|\f\le
\|H(a_-)\|\f\cdot\|H((a^{k-1})_+\|\f\le
\|a'\|\w\|(a^{k-1})'\|\w.
\] Now, since
$(a^{k-1}(z))'=(k-1)a^{k-2}(z)a'(z)$, we have 
\[
\|(a^{k-1})'\|\w\le
(k-1)\|a\|\w^{k-2}\|a'\|\w.
\]
 Thus we get
$\|L_k\|\f\le(k-1)\|a\|\w^{k-2}\|a'\|\w^2$.
\end{proof}

From the above result and from Lemma \ref{lem1}, we deduce the main
theorem of this section

\begin{theorem}\label{mainthm} Under the assumptions of 
Theorem \ref{th2}, for the matrices $E_i$ of \eqref{eq:Ei} and
 $F=\lim_k F_k$, with $F_k=\sum_{i=0}^k \frac1{i!}E_i$, we have
\[\begin{split}
&\|E_i\|\f\le \frac{i(i-1)}2 \|a'\|\w^2\|a\|^{i-2}\w,~~i\ge 0,\\
&\|F\|\f\le\frac12 \|a'\|\w^2\exp(\|a\|\w).
\end{split}\]
\end{theorem}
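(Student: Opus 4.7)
The plan is to prove the $\|E_i\|_{\mathcal F}$ bound by induction on $i$ using the recurrence \eqref{eq:Ek} from Theorem \ref{th:pow}, and then derive the $\|F\|_{\mathcal F}$ bound by summing the termwise estimates.

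First, I would take the $\mathcal F$-norm of both sides of the identity $E_i = T(a)E_{i-1} - H(a_-)H((a^{i-1})_+)$ valid for $i\ge 2$. By the triangle inequality, Lemma \ref{lem1} applied to the first summand, and Theorem \ref{th2} applied to the second summand, this yields
\[
\|E_i\|\f \le \|a\|\w \, \|E_{i-1}\|\f + (i-1)\|a\|\w^{i-2}\|a'\|\w^2.
\]
This recursive inequality is the key engine. (Note that Lemma \ref{lem1} and Theorem \ref{th2} both require $a,a'\in\mathcal W$, which is exactly the standing assumption inherited from Theorem \ref{th2}.)

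Next I would run an induction on $i$, with base cases $E_0=E_1=0$ for which the claimed bound $\frac{i(i-1)}{2}\|a'\|\w^2\|a\|\w^{i-2}$ vanishes. Assuming the bound at step $i-1$, the recursive inequality gives
\[
\|E_i\|\f \le \|a\|\w\cdot\frac{(i-1)(i-2)}{2}\|a'\|\w^2\|a\|\w^{i-3} + (i-1)\|a\|\w^{i-2}\|a'\|\w^2,
\]
and collecting the common factor $\|a\|\w^{i-2}\|a'\|\w^2$ leaves the scalar $\frac{(i-1)(i-2)}{2}+(i-1) = \frac{i(i-1)}{2}$, completing the induction. The base case $i=2$ can be checked directly and agrees.

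For the bound on $F$, I would first observe that since $\mathcal F$ is a Banach algebra (Theorem \ref{thm:3.1}), in particular a Banach space, absolute convergence of $\sum_{i\ge 0}\frac{1}{i!}E_i$ in $\|\cdot\|\f$ implies convergence in $\mathcal F$. The termwise bound just established gives
\[
\sum_{i=0}^{\infty}\frac{1}{i!}\|E_i\|\f \le \frac{1}{2}\|a'\|\w^2\sum_{i=2}^{\infty}\frac{\|a\|\w^{i-2}}{(i-2)!} = \frac{1}{2}\|a'\|\w^2\exp(\|a\|\w),
\]
so the partial sums $F_k$ form a Cauchy sequence in $(\mathcal F,\|\cdot\|\f)$ and converge to some $\widetilde F\in\mathcal F$. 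Since convergence in $\|\cdot\|\f$ is stronger than convergence in any $\ell^p$ operator norm (an observation worth spelling out: $\|S\|_p\le\|S\|_1\le\|S\|\f$), $\widetilde F$ must coincide with the limit $F$ already identified in \eqref{eq:expT}. Taking limits preserves the norm inequality, giving $\|F\|\f\le\frac{1}{2}\|a'\|\w^2\exp(\|a\|\w)$.

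The only subtle point is the last one — making sure that the $\mathcal F$-limit of $F_k$ agrees with the $\ell^p$-limit $F$ from \eqref{eq:expT}; everything else is a clean induction plus a geometric series manipulation. I do not foresee any significant obstacle.
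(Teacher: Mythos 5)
Your proposal is correct and follows essentially the same route as the paper: the recursive bound from \eqref{eq:Ek} via Lemma \ref{lem1} and Theorem \ref{th2}, the induction giving $\frac{i(i-1)}{2}\|a'\|\w^2\|a\|\w^{i-2}$, and the termwise summation yielding $\frac12\|a'\|\w^2\exp(\|a\|\w)$. Your extra care in identifying the $\mathcal F$-limit with the limit $F$ of \eqref{eq:expT} is a welcome addition the paper leaves implicit (though the parenthetical chain $\|S\|_p\le\|S\|_1$ is not literally valid for all $p$; one should instead bound $\|S\|_p$ by $\max(\|S\|_1,\|S\|_\infty)\le\|S\|\f$ via interpolation).
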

\begin{proof}
From Theorem \ref{th2} and from \eqref{eq:Ek} we have
\[
\|E_i\|\f\le\|T(a)E_{i-1}\|\f+(i-1)\|a\|\w^{i-2}\|a'\|\w^2,\quad i\ge 2,
\]
where $E_0=E_1=0$.
From Lemma \ref{lem1} we deduce that
\[
\|E_i\|\f\le  \|a\|\w\|E_{i-1}\|\f +(i-1)\|a\|\w^{i-2}\|a'\|\w^2.
\]
Therefore, by using the induction argument we arrive at
\[
\|E_i\|\f\le \frac{i(i-1)}2 \|a'\|\w^2\|a\|\w^{i-2},\quad i\ge 2,
\]
which proves the first bound.
This implies that
\[
\|F\|\f\le \sum_{i=0}^\infty\frac1{i!}\|E_i\|\f\le\frac12 \|a'\|\w^2\exp(\|a\|\w),
\]
which completes the proof.
\end{proof}

This way, we find that $\exp(T(a))=T(\exp(a))+F$, where $F$ is such
that $\|F\|\f<+\infty$. In different words, the exponential of a
semi-infinite Toeplitz matrix is a semi-infinite Toeplitz matrix, up
to a correction belonging to the set $\mathcal F$. This property,
combined with equation \eqref{eq:Ek}, enables us to design algorithms
for computing the exponential of a Toeplitz matrix expressed in the
form $T(\exp(a))+F$. This is possible since both $\exp(a)$ and $F$ can
be represented up to an arbitrarily small error, by means of a finite
number of parameters. We will see the design and analysis of this
class of algorithms in the next Section \ref{sec:algo}.

Observe also that while the relation $F=\exp(T(a))-T(\exp(a))$ is
useful to provide the upper bound
$\|F\|_p\le\|\exp(T(a))\|_p+\|T(\exp(a))\|_p\le 2\exp(\|a\|\w)$ as we
did in \eqref{eq:bound}, it cannot be used to provide finite upper
bounds to $\|F\|\w$ since any nonzero Toeplitz matrix $T(a)$ has
infinite $\mathcal W$ norm.

Before dealing with the algorithmic aspects of this problem, we wish
to extend our analysis to the case of a matrix $A=T(a)+E$ where
$\|E\|\f<+\infty$.  Also in this case we will prove that $\exp(A)$ can
be written in the form $T(\exp(a))+D$ where $\|D\|\f<+\infty$.
Matrices of the kind $T(a)+E$ are encountered in applications related
to the analysis of certain stochastic processes.

\section{A generalization}\label{sec:gener}
Let $A=T(a)+E$, where $a(z),a'(z)\in\mathcal W$,  and $E$ is any semi-infinite matrix belonging to the set
$\mathcal{F}$. The argument used in Section \ref{sec:toep} can be
applied to provide an expression to $\exp(A)$. In fact, we may write $
\exp(A)=S_k+R_k $ where $S_k=\sum_{i=0}^k\frac 1{i!}A^i$ and
$R_k=\sum_{i=k+1}^\infty \frac 1{i!}A^i$, so that $S_k=G_k+F_k$ with
 \[
 G_k=\sum_{i=0}^k\frac 1{i!}T(a^i),~~~F_k=\sum_{i=0}^k\frac 1{i!}D_i,
 \]
 where
 \begin{equation}\label{eq:Dk0}
D_i= A^i-T(a^i),~~i\ge 0.
\end{equation}
Observe that $D_0=0$, $D_1=E$. 
As in the previous section, there exists 
\[
F=\lim_{k\to\infty}F_k=\lim_{k\to\infty}S_k-\lim_{k\to\infty}G_k=\exp(A)-T(\exp(a)).
\]
Our goal is to estimate $\| F\|_\mathcal{F}$ and to show that $\|
F\|_\mathcal{F}<\infty$.

From \eqref{eq:Dk0} we deduce that, for $i\ge 1$,
\[\begin{split}
A^i=&(T(a)+E)(T(a^{i-1})+D_{i-1})\\
=&
T(a)T(a^{i-1})+ET(a^{i-1})+AD_{i-1}
\end{split}\]
and,
 in view of Theorem \ref{thm1}, it follows that
\[
A^i=T(a^{i})-H(a_-)H((a^{i-1})_+)+ET(a^{i-1})+AD_{i-1}.
\]
Hence,  we obtain
\begin{equation}\label{eq:Dk}
D_i= 
A D_{i-1}-H(a_-)H((a^{i-1})_+)+ET(a^{i-1}).
\end{equation}
In view of Lemma \ref{lem1}, we have
$\|T(a)D_{i-1}\|\f\le\|a\|\w\|D_{i-1}\|\f$ so that
\[
\|AD_{i-1}\|\f=\|T(a)D_{i-1}+ED_{i-1}\|\f\le (\|a\|\w+\|E\|\f)\|D_{i-1}\|\f.
\]
Therefore, from the above inequality and from Theorem \ref{th2},
  in view of \eqref{eq:Dk}, we obtain
\[\begin{split}
  \|D_i\|\f\le
  &(\|a\|\w+\|E\|\f)\|D_{i-1}\|\f+(i-1)\|a\|\w^{i-2}\|a'\|\w^2+
  \|a\|\w^{i-1}\|E\|\f,
  \\
  \le &\xi\|D_{i-1}\|\f +\gamma_i,~~i\ge 1,
\end{split}\]
where
\begin{equation}\label{eq:gammai}
  \xi=\|a\|\w+\|E\|\f,\quad \gamma_i=
  (i-1)\|a\|\w^{i-2}\|a'\|\w^2+\|a\|\w^{i-1}\|E\|\f,~~i\ge1,
\end{equation}
and $\|D_0\|\f=0$, $\|D_1\|\f=\|E\|\f$. Thus we may bound $\|D_i\|\f$
with the value that the polynomial $p(z)=\sum_{j=0}^{i-1}
z^j\gamma_{i-j}$ takes at $\xi=\|a\|\w+\|E\|\f$, i.e,
\begin{equation}\label{eq:Di}
\|D_i\|\f\le \sum_{j=0}^{i-1}
\xi^j\gamma_{i-j},\quad \xi=\|a\|\w+\|E\|\f.
\end{equation}

Thus, concerning the sequence $F_k$, from \eqref{eq:Di} we obtain 
\[
\|F_k\|\f\le\sum_{i=1}^k\frac1{i!}\|D_i\|\le
\sum_{i=1}^k\sum_{j=0}^{i-1}\frac1{i!}\xi^j\gamma_{i-j}.
\]
For notational simplicity set $\alpha=\|a\|\w$, $\beta=\|E\|\f$ so
that $\xi=\alpha+\beta$ and $\gamma_k=
(k-1)\alpha^{k-2}\|a'\|\w^2+\alpha^{k-1}\beta$. Then, since
$\alpha\le\xi$, we have
$\gamma_{k}\le(k-1)\xi^{k-2}\|a'\|\w^2+\xi^{k-1}\beta$. Whence we
deduce that

\[\begin{split}
\|F_k\|\f\le&\sum_{i=1}^k
\sum_{j=0}^{i-1}\frac1{i!}\left[\|a'\|\w^2(i-j-1)\xi^{i-2}+
\beta \xi^{i-1}\right]\\
=&
\|a'\|\w^2\sum_{i=1}^k\frac12 \frac{i(i-1)}{i!}\xi^{i-2}+
\beta\sum_{i=1}^k\frac{i}{i!}\xi^{i-1}\\
\le &
\frac12 \|a'\|\w^2\exp(\xi)+\beta \exp(\xi).
\end{split}\]

 Thus we may conclude with the following

\begin{theorem}\label{mainthm1} Under the assumptions of Theorem \ref{th2}, 
  for the matrices $D_i$ defined in \eqref{eq:Dk0}, and
  $F=\lim_{k\to\infty} F_k$, where $F_k=\sum_{i=0}^k \frac1{i!}D_i$,
  we have
\[
\begin{split}
  & \|D_i\|\f\le \sum_{j=0}^{i-1}(\|a\|\w+\|E\|\f)^j\gamma_{i-j}\\
  & \|F\|\f\le
  \left(\frac12\|a'\|\w^2+\|E\|\f\right)\exp(\|a\|\w+\|E\|\f)
\end{split}
\]
where the constants $\gamma_i$, $i\ge 1$, are defined in \eqref{eq:gammai}.
\end{theorem}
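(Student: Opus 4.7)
My plan is to obtain both bounds directly from the recurrence \eqref{eq:Dk} for $D_i$, which in turn comes from expanding $A^i = (T(a)+E)(T(a^{i-1})+D_{i-1})$ and applying Theorem \ref{thm1} to the product $T(a)T(a^{i-1})$.

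First, I would take $\|\cdot\|\f$ on both sides of \eqref{eq:Dk} and bound each of the three terms separately. The term $A D_{i-1} = T(a)D_{i-1}+ED_{i-1}$ is estimated by Lemma \ref{lem1} for $T(a)D_{i-1}$ and by the submultiplicativity of $\|\cdot\|\f$ (Theorem \ref{thm:3.1}) for $ED_{i-1}$, giving $\|AD_{i-1}\|\f\le(\|a\|\w+\|E\|\f)\|D_{i-1}\|\f$. The term $H(a_-)H((a^{i-1})_+)$ is bounded by Theorem \ref{th2} by $(i-1)\|a\|\w^{i-2}\|a'\|\w^2$. For $ET(a^{i-1})$ I would use the argument of Lemma \ref{lem1} applied on the right (equivalently, passing to transposes, since $\|X^\top\|\f=\|X\|\f$) together with $\|a^{i-1}\|\w\le\|a\|\w^{i-1}$, yielding $\|ET(a^{i-1})\|\f\le\|E\|\f\|a\|\w^{i-1}$. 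Collecting these produces the scalar recurrence
\[
\|D_i\|\f\le \xi\|D_{i-1}\|\f+\gamma_i,\qquad i\ge 1,
\]
with $\xi=\|a\|\w+\|E\|\f$ and $\gamma_i$ as in \eqref{eq:gammai}, starting from $\|D_0\|\f=0$.

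Next I would solve this inhomogeneous linear recurrence by a straightforward induction: unrolling it gives $\|D_i\|\f\le\sum_{j=0}^{i-1}\xi^j\gamma_{i-j}$, which is the first bound.

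Finally, for the second bound I would sum $\frac{1}{i!}\|D_i\|\f$ over $i\ge 1$. Using $\alpha=\|a\|\w\le\xi$ to dominate $\gamma_k\le (k-1)\xi^{k-2}\|a'\|\w^2+\xi^{k-1}\|E\|\f$ and swapping the order of summation, one separates the resulting double sum into two pieces proportional to $\sum_i \frac{i(i-1)}{i!}\xi^{i-2}=\exp(\xi)$ and $\sum_i \frac{i}{i!}\xi^{i-1}=\exp(\xi)$, respectively. Combining these identities yields $\|F\|\f\le(\tfrac12\|a'\|\w^2+\|E\|\f)\exp(\xi)$, which is the second bound. The same estimate shows $F_k$ is Cauchy in $\mathcal F$, so the limit $F$ exists in $\mathcal F$ and inherits the bound. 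The only nontrivial step is the combinatorial rearrangement of the double sum, but the two resulting series are textbook exponentials, so there is no real obstacle beyond careful bookkeeping.
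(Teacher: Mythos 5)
Your proposal is correct and follows essentially the same route as the paper: take $\|\cdot\|\f$ of the recurrence \eqref{eq:Dk}, bound the three terms via Lemma \ref{lem1}, submultiplicativity, Theorem \ref{th2}, and the transposed/right-hand analogue of Lemma \ref{lem1}, unroll the resulting scalar recurrence $\|D_i\|\f\le\xi\|D_{i-1}\|\f+\gamma_i$, and then sum $\frac1{i!}\|D_i\|\f$ using $\gamma_k\le(k-1)\xi^{k-2}\|a'\|\w^2+\xi^{k-1}\|E\|\f$ and the two exponential series identities. Your explicit justification of $\|ET(a^{i-1})\|\f\le\|E\|\f\|a\|\w^{i-1}$ via transposition, and the remark that the bound makes $\{F_k\}$ Cauchy in $\mathcal F$, are small additions the paper leaves implicit.
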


The above theorem states that, also in the case of $A=T(a)+E$, the
matrix $F=\exp(A)-T(\exp(a))$ is such that
$\sum_{i,j\in\ZZ^+}|f_{i,j}|<+\infty$. Moreover, if $E=0$, the bounds
given in the above theorem reduce to the bounds given in Theorem
\ref{mainthm}.

With some formal manipulations, it is possible to provide the
following bound to $\|D_i\|\f$ expressed in closed form
\[
\begin{split}
  &\|D_i\|\f\le \frac{1}{\|E\|\f}
  \left(\varphi\frac{(\|a\|\w+\|E\|\f)^i-\|a\|^i\w}{\|E\|\f}-
    \psi i\|a\|^{i-1}\w\right)\\
  &\varphi=\|a'\|\w^2+\|E\|\f^2,\quad \psi=\|a'\|\w^2
\end{split}
\]
which, taking the limit for $\|E\|\f\to 0$, coincides with the bound
of Theorem \ref{mainthm}.

\section{Algorithms}\label{sec:algo}
In this section we provide an algorithm for computing the exponential
function of a matrix $A$ such that $A=T(a)+E$ with $a(z),a'(z)\in\mathcal W$ and $E$ such that
$\|E\|\f<+\infty$. 

From the condition $a(z),a'(z)\in\mathcal W$ and $E\in\mathcal F$ it follows that
the coefficients $a_i$ of $a(z)$ decay to zero as $i\to\pm\infty$, and the entries $e_{i,j}$ of the matrix $E$ decay to zero as $i,j\to\infty$. Thus we may
represent $a(z)$ in an approximate way just by considering a finite
number of coefficients, i.e, $a(z)=\sum_{i=-n_-}^{n_+}a_iz^i+r(z)$,
where we assume that the remainder $r(z)$ is such that
$\|r\|\w\le\epsilon$, for a given error bound $\epsilon$.  Similarly, also the matrix $E$ can be represented in an approximate way by storing only a finite number of nonzero entries. 
Observe that for the decay of the coefficients $a_i$, 
also the matrix $H(a_-)$ can be represented, up to an error $\epsilon$, by
means of a semi-infinite matrix which is zero everywhere except in its
$n_-\times n_-$ leading principal submatrix, which coincides with the
Hankel matrix associated with $a_{-1},\ldots,a_{-n_-}$.

For computational reasons, it is convenient to represent this
semi-infinite matrix as the product $UV^T$ where $U$ and $V$ have
infinitely many rows and $n_-$ columns. Moreover, due to the
truncation of the series, the matrices $U$ and $V$ have null entries
for a sufficiently large row index.

Define $p_i(z)=\frac 1 {i!} a(z)^i$, $i\ge 0$, so that, for a
sufficiently large $i$, $\exp(a)$ can be approximated by $s_i(z)$,
defined my means of the recursion
\begin{equation}\label{eq:ak}
\begin{split}
&p_i(z)=\frac1i a(z)p_{i-1}(z),\\
&s_i(z)=s_{i-1}(z)+p_{i}(z),~~i\ge 1,
\end{split}\end{equation}
with $s_0(z)=1$.

We consider separately, in the following two sections, the Toeplitz
case, i.e., $A=T(a)$, and the general case where $A=T(a)+E$ with $E\ne
0$.

\subsection{The Toeplitz case}
Consider the case where $A$ is Toeplitz, i.e., $A=T(a)$ and $E=0$.
According to the results of the previous section, the matrix $\exp(A)$
is approximated by $T(s_k)+F_k$, for a suitable $k\ge 1$.  In order to
compute $s_k(z)$ we rely on formula \eqref{eq:ak}, while for computing
$F_k=\sum_{i=0}^k\frac1{i!}E_i$ we rely on recursion \eqref{eq:Ek}.

We define $\widehat E_i=\frac1{i!}E_i$, 
so that we may rewrite equation \eqref{eq:Ek} in the following form
\[
\widehat E_i=\frac 1i T(a) \widehat E_{i-1}- \frac 1 i
H(a_-)H((p_{i-1})_+),~~i\ge1,
\]
so that
$
F_k=\sum_{i=0}^k\widehat E_i
$.

In order to reduce the complexity of the computation, we will
represent also the matrices $\widehat E_i$,
$F_i$ and the matrix $H(a_-)$ in the form
\begin{equation}\label{eq:EF}
\widehat E_i=U_iV_i^T,\quad F_i=W_iY^T_i,\quad 
H(a_-)=UV^T,
\end{equation}
where $U_i,V_i,W_i,Y_i$, $U$ and $V$ are matrices with infinitely many
rows and with a finite number of columns. Moreover, due to the finite
representation, these matrices have null entries if the row index is
sufficiently large. Therefore they can be represented, up to an
arbitrarily small error, with a finite number of parameters.

Using the decompositions \eqref{eq:EF}  we may write
\[
U_iV_i^T=\frac 1i T(a)U_{i-1} V_{i-1}^T-\frac 1i UV^TH((p_{i-1})_+)
\]
whence
\begin{equation}\label{eq:update}
U_i=\begin{bmatrix}
 T(a)U_{i-1} &\brb&  U
\end{bmatrix},\quad 
V_i=\begin{bmatrix}
\frac1i V_{i-1} &\brb& -\frac 1i H(( p_{i-1})_+)V
\end{bmatrix}.
\end{equation}
Moreover, from the relation $F_k=F_{k-1}+U_kV_k^T$ we obtain
\begin{equation}\label{eq:update1}
W_k=[W_{k-1}~~\brb~~ U_k],\quad Y_k=[Y_{k-1}~~\brb~~ V_k].
\end{equation}

By using these decompositions, the implementation of equation
\eqref{eq:Ek}, together with the computation of
$F_k=\sum_{i=1}^k\widehat E_i$ and of the function
$s_k(z)=\sum_{i=0}^k p_i(z)$, will proceed as described in the
following algorithm, where matrices are formed by a finite number of
rows containing the nonzero part of the corresponding infinite
matrices.\medskip

\begin{algorithm}\label{algo:exp}\rm 
\\ {\sc Input:} Integer $k\ge 1$, the coefficient vectors of the
functions $a(z)$, $p_{k-1}(z)$, $s_{k-1}(z)$ and the matrices $U$,
$V$, $U_{k-1}$, $V_{k-1}$, $W_{k-1}$, $Y_{k-1}$, such that
\eqref{eq:EF} holds for $i=k-1$.  \\ {\sc Output:} The coefficient
vectors of the functions $p_{k}(z)$, $s_{k}(z)$ and the matrices $U_
{k}$, $V_{k}$, $W_k$, $Y_{k}$, such that \eqref{eq:EF} holds for
$i=k$.  \\ {\sc Computation:}
\begin{enumerate}
\item compute $P_1=H((p_{k-1})_+)V/k$, set $Q_1=[\frac 1k V_{k-1}
  ~\brb~ -P_1]$
\item compute $P_2=T(a)U_{k-1}$, set $Q_2=[P_2 ~\brb~ U]$
\item\label{compress1} compress the pair $Q_1,Q_2$ and get a new pair
  $V_k,U_k$
\item set $S_1=[W_{k-1} ~\brb~ U_k]$ and $S_2=[Y_{k-1} ~\brb~ V_k]$
\item\label{compress2} compress the pair $S_1,S_2$ and get the new
  pair $W_k,Y_k$
\item compute $p_k(z)=\frac1k a(z)p_{k-1}(z)$ and set
  $s_k(z)=s_{k-1}(z)+p_k(z)$
\item\label{truncate} truncate $s_k(z)$ and $p_k(z)$
\end{enumerate}
\end{algorithm}\medskip

In the above description we have used a compression operation in
stages \ref{compress1} and \ref{compress2} acting on a pair of
matrices, together with the operation of truncating a Laurent
polynomial at stage \ref{truncate}.  We will describe these operations
in the next Subsection \ref{sub:compress}.
 
Observe also that even if the involved matrices have infinitely many
rows, only a finite number of them is nonzero. A detailed
implementation of the above algorithm should keep track of the number
of the nonzero rows of each matrix. We leave this detail to the
reader.

\subsection{The general case}

Consider the case where $A=T(a)+E$, with $E\neq 0$.  According to the
results of Section \ref{sec:gener}, the matrix $\exp(A)$ is
approximated by $T(s_k)+F_k$, for a suitable $k\ge 1$, where
$F_k=\sum_{i=0}^k\frac1{i!}D_i$ and the matrices $D_i$ are defined in
\eqref{eq:Dk0}.
 
As in the previous section, define $\widehat D_i=\frac{1}{i!}D_i$, so
that, in view of \eqref{eq:Dk}, we have
\begin{equation}\label{eq:Dkhat}
  \widehat D_i= 
  \frac{1}{i} A \widehat D_{i-1}-\frac {1}{i} H(a_-)H((p_{i-1})_+)+
  \frac{1}{i}ET(p_{i-1}).
\end{equation}

Let us represent the matrices $E$, $H(a_-)$ and $D_i$ in the form
$E=WY^T$, $H(a_-)=UV^T$ and $\widehat D_i=U_iV_i^T$, where $W$, $Y$,
$U$, $V$, $U_i$ and $V_i$ have a finite number of columns.  We may
rewrite equation \eqref{eq:Dkhat} in the form
\[
U_iV_i^T=\frac{1}{i}(T(a)+WY^T)U_{i-1}V_{i-1}^T
-\frac{1}{i}UV^TH((p_{i-1})_+) +
\frac{1}{i}WY^T T(p_{i-1}).
\]
Whence we deduce that
\[\begin{split}
&
U_i=[(T(a)U_{i-1}+W(Y^TU_{i-1})~~\brb~~ U~~\brb~~ W],\\
& V_i=\left[\frac{1}{i}V_{i-1}~~\brb~~ 
-\frac{1}{i}H((p_{i-1})_+)V
~~\brb~~ 
\frac{1}{i} T(p_{i-1})^T Y
\right].
\end{split}\]

Moreover, by representing $F_k$ as $F_k=W_k Y_k^T$,
from the relation $F_k=F_{k-1}+U_kV_k^T$ we obtain
\[
W_k=[W_{k-1}~~\brb~~ U_k],\quad Y_k=[Y_{k-1}~~\brb~~ V_k].
\]

It is immediate to write an algorithm that implements the above
equations. We leave this task to the reader.

\subsection{Compression and truncation}\label{sub:compress}
Given the matrix $E$ in the form $E=FG^T$ where $F$ and $G$ are
matrices of size $m\times k$ and $n\times k$, respectively, we aim to
reduce the size $k$ and to approximate $E$ in the form $E=\widetilde
F\widetilde G^T$ where $\widetilde F$ and $\widetilde G$ are matrices
of size $m\times \tilde k$ and $n\times \tilde k$, respectively, with
$\tilde k\le k$.

We use the following procedure which, for simplicity, we describe in
the case of real matrices. Compute the pivoted (rank-revealing) QR
factorizations $F= Q_fR_fP_f$, $G= Q_gR_gP_g$, where $P_f$ and $P_g$
are permutation matrices, $Q_f$ and $Q_g$ are orthogonal and $R_f$,
$R_g$ are upper triangular with columns having non increasing
Euclidean norm. Remove the last negligible rows from the matrices
$R_f$ and $R_g$ and remove the corresponding columns of $Q_f$ and
$Q_g$.  In this way we obtain matrices $\hat R_f$, $\hat R_g$, $\hat
Q_f$, $\hat Q_g$ such that, up to within a small error, satisfy the
equation $F= \hat Q_f\hat R_fP_f$, $G= \hat Q_g\hat R_gP_g$. Then, in
the factorization $FG^T=\hat Q_f(\hat R_fP_fP_g^T\hat R_g^T)\hat
Q_g^T$, compute the SVD of the matrix in the middle $ \hat
R_fP_fP_g^T\hat R_g^T=U\Sigma V^T$ where the singular values
$\sigma_i$ satisfying the condition $\sigma_i<\epsilon\sigma_1$ are
removed together with the corresponding columns of $U$ and $V$, where
$\epsilon$ is a given tolerance, say the machine precision. In output,
the matrices $\widetilde F=\hat Q_f U\Sigma^{1/2}$, $\widetilde G=\hat
Q_gV\Sigma^{1/2}$ are delivered.

This procedure is described with more detail in the next\medskip

\begin{algorithm}\label{algo:compress}\rm
\\ {\sc Input:} Matrices $F$ and $G$ of size $m\times k$ and $n\times
k$, respectively, a real $\epsilon>0$; \\ {\sc Output:} matrices
$\widetilde F$ and $\widetilde G$ of size $ m\times \widetilde k$ and
$ n\times \widetilde k$, respectively, such that $\widetilde k\le k$
and $\|FG^T-\widetilde F\widetilde G^T\|_2\le \gamma
\epsilon\|F\|_2\|G\|_2$ for a suitable $\gamma>0$.  \\ {\sc
  Computation:}
\begin{enumerate}
\item Compute the pivoted (rank-revealing) QR factorizations
$F= Q_fR_fP_f$, $G= Q_gR_gP_g$;
\item select $h_f$ and $h_g$ the smallest integers such that
  $|(R_f)_{i,i}|<\epsilon |(R_f)_{1,1}|$ for $i>h_f$ and
  $|(R_g)_{i,i}|<\epsilon |(R_g)_{1,1}|$ for $i>h_g$;
\item denote $\hat R_f$, $\hat R_g$, the submatrices of $R_f$ and
  $R_g$ formed by the first $h_f$ rows and by the first $h_g$ rows,
  respectively; denote $\hat Q_f$, $\hat Q_g$ the submatrices of $Q_f$
  and $Q_g$ formed by the first $h_f$ and $h_g$ columns respectively;
\item compute the SVD of $ \hat R_fP_fP_g^T\hat R_g^T$, i.e., $\hat
  R_fP_fP_g^T\hat R_g^T=U\Sigma V^T$;
\item select the integer $\ell$ such that $\sigma_i<\epsilon\sigma_1$
  for $i>\ell$ where $\sigma_i$ are the singular values, and set
  $\widehat U$, $\widehat V$ the submatrices formed by the first
  $\ell$ columns of $U$ and $V$, respectively; set $\widehat
  \Sigma=\hbox{diag}(\sigma_1,\ldots,\sigma_\ell)$ so that $\|U\Sigma
  V^T-\widehat U\widehat\Sigma\widehat V^T\|_2\le \sigma_{\ell+1}$;
\item output $\widetilde F=\widehat Q_f\widehat U\widehat
  \Sigma^\frac12$, $\widetilde G=\widehat Q_g\widehat V\widehat
  \Sigma^\frac12$.
\end{enumerate}
\end{algorithm}
\medskip

One can show that the compression obtained this way is such that
$\|FG^T-\widetilde F\widetilde G^T\|_2\le\gamma\epsilon\|F\|_2\|G\|_2$
where $\gamma $ is a positive constant depending on the numerical
rank.

\subsection{Acceleration}
The scaling techniques for accelerating convergence of the exponential
series described in \cite{high:book} can be easily implemented in this
framework.  In particular, in the case where $A=T(a)$, we determine
the least integer $q$ such that $\|a\|\w/2^q<1$, then we set $\widehat
a(z)=a(z)/2^q$ so that
\[
\exp(T(a))=\exp(T(\widehat a))^{2^q}
\] 
and we may compute $\exp(T(a))$ by first computing $\exp(T(\widehat
a))$, which requires a shorter power series expansion, and then
computing $\exp(T(\widehat a))^{2^q}$ by means of $q$ steps of
repeated squarings applied to $\exp(T(\widehat a))$.

The square of a matrix of the kind $T(a)+E$ is computed by means of
the equation
\[
(T(a)+E)^2=T(a^2)-H(a_-)H(a_+)+T(a)E+ET(a)+E^2=:T(a^2)+\widehat E
\]
where
$\widehat E=-H(a_-)H(a_+)+T(a)E+ET(a)+E^2$.

Assuming that $E$ is factored in the form $E=WY^T$, for $W$ and $Y$
being slim matrices, and that $H(a_-)$ is factored as $UV^T$, being
$U$ and $V$ slim, then $\widehat E$ is factored as $\widehat
E=\widehat U\widehat V^T$ where
\begin{equation}\label{eq:hUV}
  \widehat U=[-U ~\brb~ T(a)W ~\brb~ W],\quad 
  \widehat V=[H(a_+)V ~\brb~Y ~\brb~ T(a)^TY+Y(W^TY)].
\end{equation}
A compression step, performed according to Algorithm
\ref{algo:compress} can be applied to reduce the rank of $\widehat U$
and $\widehat V$. In this case, since some of the involved matrices
are Hankel, one can try to get advantage from this property. Observe
that, if the numerical rank of the matrix $H(a_+)$ is smaller than
that of $H(a_-)$, it's more convenient to express $H(a_+)$ in the form
$H(a_+)=UV^T$ where $U$ and $V$ are suitable slim matrices.

The advantage that we obtain by means of the scaling and squaring
technique is substantial in many cases.

\subsection{Computational analysis}
We may perform a complexity analysis of the algorithms designed in the
previous sections. Here we give an outline of this analysis and we
leave to the reader the completion of the details.

We consider only the case where $A=T(a)$ and divide the problem into
the different sub-problems of evaluating the recurrence \eqref{eq:Ek}
by means of equations \eqref{eq:update} and \eqref{eq:update1},
performing the compression according to Algorithm \ref{algo:compress},
and computing the repeated squaring of a QT matrix.

Concerning \eqref{eq:update}, we have to compute the product
$T(a)U_{i-1}$, where $T(a)$ is an infinite Toeplitz matrix having
bandwidth $n_-+n_+$, and $U_{i-1}$ has infinitely many rows and a
finite number, say $r_{i-1}$, of columns.  Denoting $m_{i-1}$ the
number of numerically nonzero rows of $U_{i-1}$, the problem is
reduced to multiplying an $(m_{i-1}+n_-)\times m_{i-1}$ Toeplitz
matrix and an $m_{i-1}\times r_{i-1}$ matrix. By using fast algorithms
for Toeplitz-vector matrix multiplication we have a cost of
$O(r_{i-1}(m_{i-1}+n_-)\log (m_{i-1}+n_-))$ arithmetic operations
(ops). Similarly, the computation of the product $H((p_{i-1})_+)V$ is
reduced to multiplying an $n_{i-1}\times q_i$ Hankel matrix times a
matrix of size $q_i\times s$, where $n_{i-1}$ is the degree of the
polynomial $(p_{i-1}(z))_+$, $s$ is the number of columns of $V$,
$q_i=\min(n_{i-1},n)$, with $n$ the number of numerically nonzero rows
of $V$.  Thus, even this computation has a cost of $O(sn_{i-1}\log
n_{i-1}))$.  In fact, the product of a Hankel matrix and a vector, up
to permutation, is the same as the product of a Toeplitz matrix and a
vector where FFT-based algorithms can be used.

The cost of compression in the steps 3 and 5 of Algorithm
\ref{algo:exp} performed with Algorithm \ref{algo:compress}, which
relies on QR and SVD, is proportional to the square of the rank and to
the maximum dimension. Finally, the cost of repeated squarings, once
the rank-revealing factorization $H(a_-)=UV^T$ has been computed, is
dominated by the compression of the matrices $\widehat U$ and
$\widehat V$ defined in \eqref{eq:hUV} which is again proportional to
the square of the rank and to the maximum matrix size. On the other
hand, the computation of $U$, $V$, such that $H(a_-)=UV^T$ involves a
rank revealing factorization of the Hankel matrix $H(a_-)$. This
factorization is computed by means of a Lanczos-based
tridiagonalization with re-orthogonalization which exploits the Hankel
structure and the low cost of matrix-vector product. Details on this
computation are given in \cite{bmm:sbornik} where it is shown that the
cost is proportional to the square of the rank and to $n\log n$ where
$n$ is the size of the Hankel matrix.

Roughly speaking, the overall cost is proportional to the number of
terms in the series, to the square of the maximum rank of the
corrections and to the maximum between the band width of $T(a)$ and
the size of the correction.  In particular, the lower the rank the
faster the algorithm.

We may perform a more detailed analysis of the errors generated by the
truncation part of the algorithm. This analysis concerns the
estimation of the constant $\gamma$ given in Algorithm
\ref{algo:compress} and it consists of giving technical bounds to the
norm of the submatrices involved in the compression by exploiting the
properties of rank revealing QR and of SVD. We avoid to provide this
analysis and leave it to the reader.

\section{Numerical experiments}
We have implemented in Matlab the algorithm for computing
$\exp(T(a))$, relying on the scaling and squaring acceleration, and
performed some numerical experiments with several problems. All the
experiments have been run under the Linux system on a I3 processor
with Matlab R2016a.

A first bunch of tests concerns the computation of the exponential of
symmetric tridiagonal Toeplitz matrices $T=\hbox{trid}(1,\alpha,1)$
for different values of $\alpha$ in the range $[-4,4]$. From the
experiments it turns out that the numerical bandwidth of the Toeplitz
part, the size and the rank of the correction, as well as the CPU
time, are independent of the values of $\alpha$. The time is roughly
$0.01$ seconds, the numerical bandwidth is 35 while the correction has
size $16\times 16$ and rank 7. The relative norm errors of the
approximation are between $3\cdot 10^{-15}$ and $1.0\cdot 10^{-14}$.

Another test that we have performed concerns Toeplitz matrices
associated with the Laurent polynomial
$a(z)=\sum_{i=1}^{n_+}z^i+\sum_{i=0}^{n_-}z^{-i}$ where $T(a)$ is a
banded matrix with $n_-$ and $n_+$ diagonals below and above the main
diagonal, respectively. Here the matrix $\exp(T(a))$ has a numerical
bandwidth which grows large the larger are $n_-$ and $n_+$. In this
case, the approximation of $\exp(T(a))$ by means of the finite
exponential $\exp(T_n(a))$, where $T_n(a)$ denotes the $n\times n$
leading principal submatrix of $T(a)$, requires that $n$ is as large
as at least the numerical bandwidth of $\exp(T(a))$.

We tested the cases obtained with $n_+=5$, for different values of
$n_-$ where, concerning $T_n(a)$, we have chosen $n=2m$ where $m$ is
the numerical bandwidth of $T(\exp(a))$.  We have doubled the value of
$m$ in order to remove the boundary effect in the computation of the
finite exponential. Table \ref{tab1} reports, the value of $n_-$, the
time $t_{QT}$ needed by our Matlab function to compute $\exp(T(a))$
and the time $t_{\tt expm}$ needed by the Matlab function {\tt expm}
for computing the matrix exponential of $T_n(a)$, and the relative
error in norm
$\|\exp(T(a))_m-\exp(T_n(a))_m\|_\infty/\|\exp(T(A))_m\|_\infty$,
together with the numerical bandwidth of $T(\exp(a(z)))$, the size and
the rank of the correction $F$ such that $\exp(T(a))=T(\exp(a))+F$.

\begin{table}\begin{center}
\begin{tabular}{r|r|r|r|r|r|r|r}
$n_-$&$t_{QT}$&$t_{\tt expm}$&err&band&rows&cols&rank\\ \hline
10 &  0.05 & 0.32 & 2.3e-14 & 331 & 372 & 245 & 26 \\
20 &  0.10 & 3.52 & 6.6e-14 & 831 & 752 & 271 & 23  \\
30 &  0.10 & 20.70 & 2.1e-13 & 1519 & 1708 & 291 & 18  \\
40 &  0.12 & 102.99 & 2.5e-13 & 2377 & 2948 & 230 & 11  \\
50 &  0.15 & - & - & 3393 & 3343 & 214 & 10  \\
60 &  0.22 & - & - & 4563 & 4490 & 267 & 10  \\
70 &  0.22 & - & - & 5881 & 5827 & 50 & 9  \\
80 &  0.29 & - & - & 7343 & 7283 & 49 & 9  \\
90 &  0.30 & - & - & 8947 & 8867 & 47 & 9  \\
100 &  0.38 & - & - & 10689 & 13383 & 45 & 8  \\
\end{tabular}\caption{Exponential of a Toeplitz matrix having 5 
  diagonals of 1s in the upper triangular part and $n_-$ diagonals 
  of 1s in the lower triangular part. Comparisons of the CPU time 
  needed by the algorithm based on QT matrices and by the Matlab 
  {\tt expm} function. Relative errors in norm of the values computed 
  in the two different ways, band-width of the Toeplitz part, size and 
  rank of the correction $F$ in the matrix exponential 
  $\exp(T(a))$.}\label{tab1}
\end{center}
\end{table}

We can see from the table the different growth of the CPU time for the
two algorithms and the small value of the rank of the correction,
despite the large size of the band. The error is reported only for
$n_-\le 40$ since for larger values of $n_-$ a memory overflow is
encountered by the function {\tt expm}.

It is interesting to observe that, roughly, the CPU time grows
proportionally to the maximum size of the correction and of the
bandwidth, times the square of the rank.

In order to figure out how the size and the rank of the correction
grow in the partial sum $S_k=\sum_{i=0}^k\frac 1{i!}A^i$, as $k$
increases, we considered the case where $a(z)=\sum_{j=-20}^{20}a_jz^j$
and the coefficients $a_j$ are randomly distributed in $[0,1]$. In
Figure \ref{fig:corr} we report the graphs of the number of nonzero
rows, the number of nonzero columns and the numerical rank of the
correction, as function of $k$, in the power $T(a)^k$, in the partial
sum $Y_k$ and in the matrices generated at any step of the repeated
squaring part of the algorithm.

It is interesting to point out that, while the number of nonzero rows
and columns in the correction matrices grows with $k$ almost linearly,
the value of the numerical rank does not grow much with $k$. This
property makes it cheap to update the correction $E_k$ by means of
\eqref{eq:update} and \eqref{eq:update1}, and to perform compression
according to Algorithm~\ref{algo:compress}.

\pgfdeclareimage[width=10cm]{corr_power}{corr_rand_20_20_pow}
\pgfdeclareimage[width=10cm]{corr_sum}{corr_rand_20_20_sum}
\pgfdeclareimage[width=10cm]{corr_squaring}{corr_rand_20_20_square}

\begin{figure}
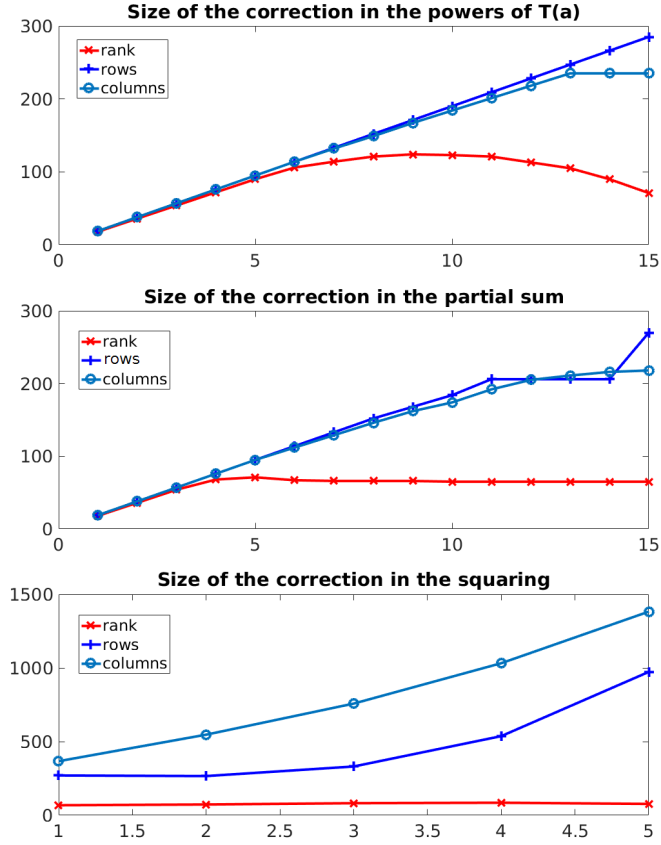

\begin{center}
\begin{tabular}{c}
\pgfuseimage{corr_power}\\
\pgfuseimage{corr_sum}\\
\pgfuseimage{corr_squaring}
\end{tabular}\caption{Growth, as function of $k$, of the number of rows/columns and
  of the numerical rank in the correction matrix of $T(a)^k$, $S_k$, and of the
  matrices generated by the squaring phase of the algorithm. The
  function $a(z)=\sum_{i=-20}^{20}a_iz^i$ has random coefficients $a_i$
  uniformly distributed between $0$ and $1$.}\label{fig:corr}
\end{center}
\end{figure}

Another test that we have performed concerns the option pricing
problem using the Merton model of \cite{kressner}, \cite{merton}. In
this case, the exponential of a finite $n\times n$ Toeplitz matrix
$T_n$ has to be computed. This matrix can be associated with a symbol
$a^{(n)}(z)$ which depends on the value of $n$. In our experiment, we
considered, as test problem, the semi-infinite Toeplitz matrix
$T(a^{(n)})$ having as leading principal submatrix the $n\times n$
Toeplitz matrix $T_n$ of the Merton model.  We have compared the two
$\frac n2\times\frac n2$ leading principal submatrices of
$\exp(T(a^{(n)}))$ and $\exp(T_n)$ just to verify if the two different
matrices have some similarity.  We computed $\exp(T_n)$ with the
function {\tt expm}, and $\exp(T(a^{(n)}))$ with our algorithm. In
Table \ref{tab:merton} we report the CPU time of this computation and
the relative errors in norm concerning the principal submatrices of
size $\frac n2$ of the two matrices, and the rank of the correction
$F$.  We can see that the CPU time grows slightly more than linearly
with the size $n$ and that the rank of the correction does not grow
with $n$. Also the error, computed only if the size is lower than
4096, takes small values.

\begin{table}
\begin{center}
\begin{tabular}{c|c|c|c|c}
$n$& $t_{QT}$& $t_{\tt expm}$&err&rank\\ \hline
512 &  0.34 & 0.16 & 2.7e-12 & 18 \\
1024 &  0.72 & 1.33 & 2.8e-11 & 18 \\
2048 &  2.16 & 6.43 & 3.6e-10 & 18 \\
4096 &  5.31 & - & - & 19 \\
8192 &  17.18 & - & - & 19 \\
\end{tabular}
\end{center}\caption{CPU time, error, and rank in the Merton model.}\label{tab:merton}
\end{table}

\section{Final remarks}
A classical approach to improve the computation of the matrix
exponential is to rely on Pad\`e approximation \cite{high:book}. This
technique consists in approximating the exponential function by means
of a ratio of polynomials of low degree. We can apply this technique
in our framework provided that an efficient method for computing
inverses of QT matrices is designed. This is possible by relying on
the Wiener-Hopf factorization of the symbol associated with the
Toeplitz part of the matrix. Details on inverting QT matrices are
given in \cite{bmm}.

The approach that we have presented in this paper can be applied to
deal with general matrix functions and with Toeplitz matrices of
finite size $n$ where $n$ is sufficiently large with respect to the
decay of the coefficients of $a(z)$. For simplicity consider the case
of a Laurent polynomial $a(z)=\sum_{i=-n_-}^{n_+}a_iz^i$ for
$n_-,n_+>0$, and the associated $n\times n$ matrix $T_n(a)=(t_{i,j})$,
$t_{i,j}=a_{j-i}$ if $-n_-\le j-i\le n_+$, and $t_{i,j}=0$
otherwise. Observe that Theorem \ref{thm1}, reformulated for finite
matrices and applied to $T_n(a)^2$, leads to the following equation
\[
T_n(a)^2=T_n(a^2)-H_n(a_-)H_n(a_+)-JH_n(a_+)H_n(a_-)J
\]
where $J$ is the flip matrix having ones on the anti-diagonal and
zeros elsewhere, and $H_n(b)$ denotes the $n\times n$ leading
principal submatrix of $H(b)$.

If $n$ is sufficiently large, i.e., $n>n_-+n_+$, then the matrices
$H_n(a_-)H_n(a_+)$ and $JH_n(a_+)H_n(a_-)J$ have disjoint supports
contained in the upper leftmost corner and in the lower rightmost
corner, respectively. Thus, $T_n(a)^2$ can be represented as the sum
of the Toeplitz matrix associated with the Laurent polynomial $a^2(z)$
and a correction $E$ which involves a (small) finite number of nonzero
entries located in two opposite corners of the support. The same
property holds for the powers $T_n(a)^k$ for the values of $k$ for
which the size of the numerical support of the correction $E$ does not
grow much.  Here, for numerical support of a matrix $A=(a_{i,j})$ we
mean the set of indices $(i,j)$ for which $|a_{i,j}|<\epsilon \|A\|$
for some norm.

Thus, if the exponential decay of the coefficients of $a(z)$ and of
$\exp(a(z))$ is sufficiently large with respect to the size $n$, then
representing the powers $T_n(a)^k$ as well as $\exp(T_n(a))$ in the
above form may be computationally effective.

Algorithms for dealing with the finite case can be easily obtained
from the algorithms presented in Section \ref{sec:algo} by repeating
the computation for the correction in the lower rightmost corner
involving the matrices $JH_n(a_+)H_n(a_-)J$.

More details in this regard can be found in \cite{bmm:sbornik} where
it is performed the computational analysis of general functions of
finite and infinite Toeplitz matrices expressed either in terms of
power series or of Cauchy integrals.

\subsection{Conclusions}
We have provided a framework to compute matrix functions of a quasi
Toeplitz matrix based on the fact that matrices of the kind $A=T(a)+E$
form a matrix algebra if $a(z)$ and $a'(z)$ belong to the Wiener class 
 and
$\|E\|\f<+\infty$.  A specific analysis has been performed for the
exponential function.  Numerical experiments confirm the effectiveness
of our approach.  This framework can be applied to the case of finite
matrices and extended to the case of functions expressed by means of a
Cauchy integral. 

This analysis has set some open issues like analyzing the growth, as a
function of $k$, of the numerical rank of the correction $E_k$ such
that $T(a)^k=T(a^k)+E_k$. From the numerical experiments performed so
far with some function $a(z)$ this rank seems to be a bounded function
of $k$. Indeed, the growth of the numerical rank of $E_k$ is related
to the decay of the coefficients of the function $a(z)$ and it is
worth being investigated.

\section*{Acknowledgments}
Dario Bini wishes to thank Bernd Beckermann and Daniel Kressner for very helpful discussions on topics related to the subject of this paper.

\bibliographystyle{abbrv}

\end{document}